\definecolor{red}{rgb}{0.8,0,0}
\definecolor{darkorange}{rgb}{1,0.4,0}
\definecolor{lightorange}{rgb}{1,0.6, 0}
\definecolor{yellow}{rgb}{1,0.8, 0}
\newtheorem{theorem}{Theorem}
\newtheorem{remark}{Remark}
\newtheorem{corollary}{Corollary}
\newtheorem{lemma}{Lemma}
\newtheorem{proposition}{Proposition}
\newcommand\tr{\operatorname{tr}}
\newcommand\inc{\operatorname{inc}}
\newcommand\skw{\operatorname{skw}}
\newcommand\vskw{\operatorname{vskw}}
\newcommand\mskw{\operatorname{mskw}}
\newcommand\sym{\operatorname{sym}}
\newcommand\grad{\operatorname{grad}}
\newcommand\deff{\operatorname{def}}
\renewcommand\div{\operatorname{div}}
\renewcommand\ker{\mathcal{N}}
\newcommand\curl{\operatorname{curl}}
\newcommand\dev{\mathrm{dev}}
\newcommand\alt{\mathrm{Alt}}
\newcommand\hess{\operatorname{hess}}
\newcommand\ran{\mathcal{R}}
\newcommand\supp{\operatorname{supp}}
\newcommand\K{\mathbb{K}}
\newcommand\M{\mathbb{M}}
\renewcommand\S{{\mathbb S}}
\newcommand\R{\mathbb{R}}
\newcommand\V{{\mathbb{V}}}
\newcommand{\bs}{{\scriptscriptstyle \bullet}}
\begin{document}
\title{Bounded Poincar\'e operators for twisted and BGG complexes}

\author{Andreas \v{C}ap \and  Kaibo Hu}
\address{Faculty of Mathematics, University of Vienna, Oskar-Morgenstern-Platz 1, 1090 Wien, Austria}
\email{Andreas.Cap@univie.ac.at}
 \address{Mathematical Institute,
University of Oxford, Andrew Wiles Building, Radcliffe Observatory Quarter, 
Oxford, OX2 6GG, UK}
\email{Kaibo.Hu@maths.ox.ac.uk}
\date{\today.}
\maketitle

\begin{abstract}
Nous construisons des opérateurs de Poincaré bornés pour les complexes tordus et les complexes BGG pour une large classe d'espaces fonctionnels (par exemple, les espaces de Sobolev) sur des domaines de Lipschitz bornés. Ces opérateurs sont dérivés des versions de de~Rham en utilisant les diagrammes BGG et, pour une cohomologie triviale, satisfont à l'identité d'homotopie $dP+Pd=I$ pour des degrés $>0$. Les opérateurs préservent les classes polynomiales si les versions de~Rham le font. Le cas d'une cohomologie non triviale ainsi que la propriété de complexe $P\circ P=0$ peuvent être incorporés. Nous présentons des applications aux suites exactes de polynômes.
\end{abstract}

\begin{abstract}
We construct bounded Poincar\'e operators for twisted complexes and BGG complexes with a wide class of function classes (e.g., Sobolev spaces) on bounded Lipschitz domains. These operators are derived from the de~Rham versions using BGG diagrams and, for vanishing cohomology, satisfy the homotopy identity $dP+Pd=I$  in degrees $>0$. The operators preserve polynomial classes if the de~Rham versions do so. Nontrivial cohomology and the complex property $P\circ P=0$ can be incorporated. We present applications to polynomial exact sequences.
\end{abstract}
\smallskip
\noindent \textbf{Keywords.} Bernstein-Gelfand-Gelfand construction, Hilbert complexes, Poincar\'e operators, polynomial complexes

\noindent \textbf{MSC Codes.} 58J10, 65N30

\section{Introduction}

Poincar\'e operators are a basic tool in the theory of complexes. In the simplest case, they can be used to prove vanishing cohomology of a complex $(V^{\bs}, D^{\bs})$. For this purpose, one defines a graded operator $P^{\bs}$ such that $P^k:D^k\to D^{k-1}$ and such that $D^{k-1}P^{k}+P^{k+1}D^{k}=I$ for $k>0$  together with a simple condition in degree zero. In the case of non-trivial cohomology, a Poincar\'e operator determines a subcomplex with isomorphic cohomology. For this case, the above relation has to be modified to $D^{k-1}P^{k}+P^{k+1}D^{k}=I-L^k$ for operators $L^k:D^k\to D^k$ with favorable properties. Poincar\'e operators for the de~Rham complex play a key role in various applications.  In manifold theory, the existence of Poincar\'e operators on star-shaped domains immediately implies local exactness of the de~Rham complex. In PDE theories, the Poincar\'e operators serve as the homotopy inverse of differential operators and thus imply the existence of solutions for, e.g., the Stokes problem. See, e.g., \cite{acosta2017divergence,danchin2012divergence}, for the inverse of the divergence operator. For smooth de~Rham complexes, the Poincar\'e operators also form a complex $P^{k-1}\circ P^{k}=0$ and are polynomial-preserving. These properties are important for constructing finite element spaces \cite{arnold2006defferential,christiansen2018generalized,christiansen2019finite,hiptmair1999canonical,hu2022oberwolfach,hu2020simple,hu2022family} and establishing $p$-robustness for high order methods (see, e.g.,  \cite{bespalov2010convergence,ern2015polynomial}).

Many of these applications require Poincar\'e operators that are bounded between certain function spaces (e.g., Sobolev spaces).  Costabel and McIntosh \cite{costabel2010bogovskiui} established such operators and showed that these operators are pseudo-differential operators of order -1 and thus are bounded in a broad class of function spaces. The regularized Poincar\'e and Bogovski\u{\i} operators are special cases of this general construction. Consequences of the construction in \cite{costabel2010bogovskiui} include a number of analytic results, such as the Poincar\'e inequalities, the existence of regular potential and compactness \cite[Section 2]{arnold2021complexes}.  From explicit Poincar\'e operators,  one may also estimate constants in inequalities. We refer to \cite{guzman2020estimation} for work in this direction.

In addition to the de~Rham complexes, other differential complexes encode structures in various problems in continuum mechanics   \cite{amstutz2019incompatibility,angoshtari2015differential,kroner1995dislocations} and general relativity \cite{beig2020linearised,quenneville2015new}. The Bernstein-Gelfand-Gelfand (BGG) machinery provides a systematic way to construct such complexes \cite{arnold2021complexes,vcap2022bgg,vcap2001bernstein,eastwood2000complex}. This includes the elasticity complex as an example. Analytic results of Costabel-McIntosh's type were established in \cite{arnold2021complexes,vcap2022bgg}  for these complexes. The results were directly carried over from the de~Rham complexes via BGG diagrams, which avoided the use of Poincar\'e operators. Nevertheless, inspired by applications associated with the de~Rham complexes, explicit Poincar\'e operators are still in need for these complexes, as they
\begin{itemize}
\item establish exact sequences with polynomial coefficients, which are crucial for finite element computation;
\item facilitate the construction and analysis of $p$-robust numerical methods (algorithms that are uniformly robust with polynomial degree);
\item contain explicit constants, such that refined estimates of the constants are possible;
\item provide tools for establishing mechanics models (e.g., applications of the  Ces\`aro-Volterra formula in intrinsic elasticity \cite{ciarlet2009intrinsic}).
\end{itemize}
For the elasticity complex with smooth functions on domains with trivial topology, Poincar\'e operators were derived in \cite{christiansen2020poincare}. 
The operator at index one coincides with the Ces\`aro-Volterra formula in classical elasticity. However, these operators are not bounded between function spaces involved in PDEs and numerical analysis. This makes it impossible to directly generalize several of the applications discussed for the de Rham complex above to tensor-valued problems such as elasticity and relativity. 


In this paper, we fill this gap by deriving bounded Poincar\'e operators for BGG complexes. The basic idea is to follow the BGG diagrams and use complex maps and a diagram chase to obtain the operators for the BGG complexes from the de~Rham versions. The strategy is a generalization of the special case in  \cite{christiansen2020poincare}. We show that the construction can be applied to the BGG complexes constructed in \cite{vcap2022bgg} from representation theory input, where the two steps in the construction need different regularity assumptions. In particular, the $K$ operators in the BGG diagram that were a crucial tool in \cite{christiansen2020poincare} are not available (as bounded operators) in the second step. Compared to  \cite{christiansen2020poincare},
\begin{itemize}
\item The Poincar\'e operators are bounded and valid for all the BGG complexes in \cite{arnold2021complexes} and \cite{vcap2022bgg}. Particularly, this works for not only BGG complexes derived from diagrams with two rows (e.g., the elasticity complex), but also for those derived from diagrams with several rows (e.g., the conformal deformation complex).
\item For the examples from \cite{vcap2022bgg} coming from representation theory, we show that the derived operators are polynomial-preserving. 
\item We develop a general algebraic strategy to modify the derived operators such that the complex property $\mathscr{P}^{k-1}\circ \mathscr{P}^{k}=0$ holds.
\item The work \cite{christiansen2020poincare} assumes trivial cohomology, while in this paper, we discuss the incorporation of nontrivial cohomology.
\end{itemize}

An intermediate step in the construction of BGG complexes is the twisted complexes, which are important in their own right. In geometry, the twisted complex can be viewed as providing a prolongation of the operators of geometric origin that show up in the BGG complexes, (c.f.\ the interpretation of the elasticity complex as the Riemannian deformation complex). In continuum mechanics, the twisted complexes can be interpreted as encoding additional degrees of freedom compared to the BGG complexes.  More specifically, from one to three space dimensions, Hodge-Laplacian problems of an appropriate twisted de~Rham complex correspond to the linearized Timoshenko beam, the Reissner-Mindlin plate, and the Cosserat elasticity models, respectively; the resulting BGG complexes correspond to the linearized Euler-Bernoulli beam, the Kirchhoff-Love plate, and the standard elasticity, respectively. See \cite[Section 5]{vcap2022bgg} for more details. In this paper, we also derive bounded Poincar\'e operators for the twisted complexes. Existing results for the elasticity version of the twisted complex in the context of Cosserat continuum can be found in \cite{gunther1958statik}.

The Ces\'aro-Volterra formula has been generalized to the case with ``little regularity'' in  \cite{ciarlet2010cesaro}. The construction in \cite{ciarlet2010cesaro} leads to an implicit form based on duality, while the corresponding result in this paper has a more explicit form, preserves polynomial classes and is valid for a wider range of function spaces. As a further application of the new Poincar\'e operators, we construct exact BGG complexes with polynomial coefficients. 



We summarize the main results in this paper using a specific example, i.e., the elasticity complex in 3D, which has the form (see Section \ref{sec:de Rham} for the notation):
\begin{equation}\label{elasticity-q}
\begin{tikzcd}
0\arrow{r}& H^{q}(\Omega)\otimes \mathbb{V}\arrow{r}{\deff} &  H^{q-1}(\Omega)\otimes \mathbb{S} \arrow{r}{\inc} &  H^{q-3}(\Omega)\otimes \mathbb{S} \arrow{r}{\div} &H^{q-4}(\Omega)\otimes \mathbb{V}\arrow{r} & 0,
\end{tikzcd}
\end{equation}
Similar conclusions hold for other complexes, e.g., the strain and stress complexes in 2D, and the Hessian, $\div\div$, conformal Hessian, and conformal deformation complexes in 3D \cite{arnold2021complexes,vcap2022bgg}.
\begin{theorem}\label{thm:main-example}
Let $\Omega$ be a bounded Lipschitz domain.
For any real number $q$, there exist bounded operators $\mathscr{P}^{i}, ~i=1, 2, 3$ that fit in the following sequence:
\begin{equation} \label{seq:P}
\begin{tikzcd}[swap]
0&\arrow{l} H^{q}(\Omega)\otimes \mathbb{V}&\arrow{l}{\mathscr{P}^{1}}  H^{q-1}(\Omega)\otimes \mathbb{S}&\arrow{l}{\mathscr{P}^{2}}  H^{q-3}(\Omega)\otimes \mathbb{S} &\arrow{l}{\mathscr{P}^{3}}H^{q-4}(\Omega)\otimes \mathbb{V}&\arrow{l}  0,
\end{tikzcd}
\end{equation}
  such that the following homotopy relations hold: 
\begin{align*}
\mathscr{P}^{1}\deff u &=(I-\mathscr{L}_{V}^{0})u, \quad \forall u\in H^{q}\otimes \mathbb{V},\\
(\deff\mathscr{P}^{1}+\mathscr{P}^{2}\inc) e&=(I-\mathscr{L}_{V}^{1})e,   \quad \forall e\in H^{q-1}\otimes \mathbb{S},\\
(\inc\mathscr{P}^{2}+\mathscr{P}^{3}\div) \sigma&=(I-\mathscr{L}_{V}^{2})\sigma,   \quad \forall \sigma\in H^{q-3}\otimes \mathbb{S},\\
\div  \mathscr{P}^{3}v&=(I-\mathscr{L}_{V}^{3})v,   \quad \forall  v\in H^{q-4}\otimes \mathbb{V},
\end{align*}
where $\mathscr{L}_{V}^{i}, ~i=0, 1, 2, 3$ are induced by pseudodifferential operators of order $-\infty$ and hence are smoothing operators.

If   $\Omega$ is  a contractible domain,  then ${\mathscr{L}}_{V}^{k}=0$ for $k>0$, while the cohomology in degree $0$ is the space $\mathrm{RM}:=\{a+b\wedge x: a, b\in \mathbb{R}^{3}\}$ of infinitesimal rigid body motions.  If $\Omega$ is further starlike, one can choose the operators $\mathscr{P}^{\bs}$ such that the complex property $\mathscr{P}\circ \mathscr{P}=0$ holds, i.e., \eqref{seq:P} is a complex. The operators $\mathscr{P}^{\bs}$ preserve polynomial classes, i.e., the operators fit in the following sequence for any $r\geq 0$:
\begin{equation} \label{seq:Hr}
\begin{tikzcd}[swap]
0&\arrow{l} \mathcal{H}_{r+4} \otimes \mathbb{V}&\arrow{l}{\mathscr{P}^{1}} \mathcal{H}_{r+3}\otimes \mathbb{S}&\arrow{l}{\mathscr{P}^{2}}  \mathcal{H}_{r+1}\otimes \mathbb{S} &\arrow{l}{\mathscr{P}^{3}}\mathcal{H}_{r}\otimes \mathbb{V}&\arrow{l}  0,
\end{tikzcd}
\end{equation}
where $\mathcal{H}_{s}$ is the space of homogeneous polynomials of degree $s$. 
\end{theorem}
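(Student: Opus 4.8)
The plan is to obtain Theorem~\ref{thm:main-example} as the special case, for a two-row de~Rham diagram with coefficients in $\mathbb{V}$, of the general construction carried out in the body of the paper. The elasticity complex \eqref{elasticity-q} is the BGG complex of such a diagram, so it suffices to (i) record bounded Poincar\'e operators for the de~Rham complex, (ii) lift them to the associated twisted de~Rham complex, and (iii) transport them along the BGG diagram to \eqref{elasticity-q}. For (i) I would invoke the regularized Poincar\'e operators of Costabel--McIntosh \cite{costabel2010bogovskiui}: they are pseudodifferential of order $-1$, hence bounded on all the Sobolev scales in \eqref{seq:P}; they satisfy $dP+Pd=I-\mathscr{L}$ with $\mathscr{L}$ smoothing on a bounded Lipschitz domain; and on a star-shaped domain they reduce to the classical homotopy operator, so that $\mathscr{L}=0$ in positive degrees, the kernel in degree zero is the constants, and homogeneous polynomial degrees are preserved (raised by one by each application of $P$).

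For (ii) the twisted differential has block form $\tilde d=d\otimes I+S$ with $S$ an algebraic, zeroth-order (hence bounded) connecting map that is nilpotent; I would take the twisted Poincar\'e operator to be the triangular operator matrix with the de~Rham operators $P$ on the diagonal and the finitely many compositions $\pm PSP,\ PSPSP,\dots$ off the diagonal. A block computation using only the de~Rham homotopy identity and boundedness of $S$ gives $\tilde d\tilde P+\tilde P\tilde d=I-\tilde{\mathscr{L}}$ with $\tilde{\mathscr{L}}$ again induced by smoothing operators, and boundedness is inherited from $P$.

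For (iii) I would use the cochain maps $\iota$ (into) and $\pi$ (out of) the twisted complex produced by the BGG diagram chase, which in this two-row case are assembled from $P$ and the bounded generalized inverses (the ``$K$ operators'') of the algebraic connecting maps, and which satisfy $\pi\tilde d=D\pi$, $\tilde d\iota=\iota D$, $\pi\iota=I$. Setting $\mathscr{P}:=\pi\circ\tilde P\circ\iota$ then gives $D\mathscr{P}+\mathscr{P}D=\pi(\tilde d\tilde P+\tilde P\tilde d)\iota=I-\pi\tilde{\mathscr{L}}\iota$, so $\mathscr{L}_V:=\pi\tilde{\mathscr{L}}\iota$ is smoothing; on a star-shaped domain all remainders vanish in positive degrees and one reads off the degree-zero cohomology as $\mathrm{RM}$. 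Polynomial preservation propagates through all three steps because $P$ preserves homogeneous degrees on a star-shaped domain while $S$ and the $K$ operators are algebraic; tracking the shifts --- one per application of $P$, and two across the second-order arrow $\inc$ --- yields \eqref{seq:Hr}. Finally, on a star-shaped domain, where the homotopy identity is exact, a separate algebraic modification lemma perturbs the contracting homotopy $\mathscr{P}^{\bs}$ into one with $\mathscr{P}\circ\mathscr{P}=0$.

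I expect the main obstacle to lie in step (iii) for BGG complexes built from diagrams with more than two rows (such as the conformal deformation complex): there the second reduction step has no bounded $K$ operators, so the tidy formula $\mathscr{P}=\pi\tilde P\iota$ must be replaced by one that uses the de~Rham Poincar\'e operators directly, and the two reduction steps need genuinely different regularity hypotheses; reconciling these while keeping every remainder smoothing and, on star-shaped domains, polynomial-preserving is the technical heart of the general argument. For the elasticity complex itself only the two-row case is needed, so this difficulty does not arise there.
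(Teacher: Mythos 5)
Your outline mirrors the paper's architecture (de~Rham $\to$ twisted $\to$ BGG), and steps (i), the polynomial-preservation bookkeeping, and the algebraic perturbation to enforce $\mathscr{P}\circ\mathscr{P}=0$ all match the paper's Sections 3--5. However, two of your key technical claims are not correct as stated, and the second one affects whether the "smoothing'' conclusion actually follows.

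First, the conjugation mechanism you invoke in step (ii) --- defining the twisted homotopy as a triangular operator built from $PS$, with the implicit justification that $F=(I-PS)^{-1}$ is a chain map --- is only valid when the de~Rham null-homotopy is exact, $dP+Pd=I$. That identity is used essentially in the paper's Lemma~\ref{lem:I-PS}: if instead $dP+Pd=I-L$ with $L\neq 0$, a direct computation shows $d(I-PS)-(I-PS)d_V=LS$, so $F$ fails to intertwine $d$ and $d_V$. For a bounded Lipschitz domain --- which is exactly the case Theorem~\ref{thm:main-example} treats --- the paper therefore does \emph{not} run the conjugation argument; it instead patches together twisted Poincar\'e operators built on a finite collection of star-shaped subdomains via a partition of unity (Section~\ref{sec:Lipschitz}), imitating the Costabel--McIntosh construction at the level of the twisted complex, and obtains the smoothing property of $L_V$ from the "flat at every point'' trick (Lemma~4.5 of \cite{costabel2010bogovskiui}). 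Your "block computation using only the de~Rham homotopy identity'' glosses over the fact that for $L\neq 0$ the computation produces additional cross terms whose structure must be controlled; the paper circumvents this entirely.

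Second, your description of the BGG cochain maps $\iota,\pi$ as "assembled from $P$ and the bounded generalized inverses (the `$K$ operators') of the algebraic connecting maps'' conflates two distinct objects and contradicts what makes the argument work on Sobolev scales. The cochain maps between the twisted and BGG complexes ($A$ and $B$ in the paper's notation) do not involve the Poincar\'e operators $P$ at all, and they most emphatically do not involve the $K$ operators of \cite{vcap2022bgg} --- those $K$'s are defined by $S=dK-Kd$ and are precisely the maps that are \emph{not} bounded across the unequal Sobolev regularities of the BGG rows (Remark after Proposition~\ref{prop:sharp-cohom-twisted}). Rather, $A$ and $B$ are built from $T$ (the bounded generalized inverse of the algebraic map $S$), the exterior derivative, and orthogonal projections, and this makes them differential operators defined on all of $\mathbb R^n$. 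That fact is the lever in part (3) of Theorem~\ref{thm:Lip-domains}: since $A$, $B$ are differential operators, conjugating a pseudodifferential operator of order $-\infty$ by them stays of order $-\infty$, which is how $\mathscr{L}=BL_VA$ is shown to be smoothing. If you tried to build $A,B$ from $P$ or from the actual $K$'s, this argument would fail. Your final paragraph also misattributes the $K$-operator issue to diagrams with more than two rows; in fact even the two-row elasticity case on Sobolev scales has no bounded $K$'s, which is why the paper reformulates both steps to avoid them.
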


The rest of the paper is organized as follows. In Section \ref{sec:de Rham}, we review basic facts on Poincar\'e operators as well as the results on existence of bounded Poincar\'e operators for de~Rham complexes. In Section \ref{sec:construction}, we present the main algebraic construction for the BGG versions in a general setting and assuming trivial cohomology. In Section \ref{sec:examples}, we first show that our construction can be applied to the examples from \cite{vcap2022bgg} derived from representation theory. We prove that in these cases the resulting operators are polynomial preserving and that the complex property can be achieved. Then we discuss several explicit examples of the construction.  In Section \ref{sec:modifications}, we discuss the changes needed to deal with non-vansihing cohomology. Finally, vector/matrix proxies and an application to polynomial exact sequences are presented in Section \ref{sec:applications}.

\section{Review: Poincar\'e operators and de~Rham complexes}\label{sec:de Rham}

In this section, we review Poincar\'e operators, in particular various versions for de Rham complexes. The results will be a crucial ingredient in the construction of the BGG version. Moreover, the applications will serve as a motivation for the development and fix the notation used in the rest of this paper. 

Let $V^{i}, i=0, 1, \cdots, n$ be  vector spaces and $d^{i}: V^{i}\to V^{i+1}, i=0, 1, \cdots, n-1$ be linear operators. Then a sequence 
\begin{equation*}
\begin{tikzcd}
\cdots\arrow{r}{}&V^{i-1}\arrow{r}{d^{i-1}} &V^{i} \arrow{r}{d^{i}}&V^{i+1}\arrow{r}{ }&\cdots
 \end{tikzcd}
\end{equation*}
is called a complex if $d^{i+1}\circ d^{i}=0,~ \forall i$. A consequence of the complex property is $\ran(d^{i-1})\subset \ker(d^{i})$. Cohomology is the quotient space $\mathscr{H}^{i}:=\ker(d^{i})/\ran(d^{i-1})$. Suppose that there exists linear maps $P^{k}: V^{k}\to V^{k-1}, k=1, 2, \cdots, n$, such that 
\begin{equation}\label{DPPD-general}
d^{k-1}P^{k}+P^{k+1}d^{k}=I_{V^{k}}  \text{\ for\ } k>0 \text{\ and\ } d^0P^1d^0=d^0,  
\end{equation}
then we call $P^{\bs}$ an (algebraic) Poincar\'e operator. We will call \eqref{DPPD-general} a null-homotopy relation. Note that \eqref{DPPD-general} for $k>0$ immediately implies the exactness of the complex and hence vanishing cohomology. Indeed, for any $u\in V^{k}$ with $du=0$, we get $u=d(Pu)$, so the class of $u$ in $\mathscr{H}^k$ vanishes. The condition in degree $0$ readily implies that for $u\in V^0$, $u-Pdu$ lies in the kernel of $d$, which coincides with the cohomology in degree zero. To allow for nontrivial cohomology, one can weaken the relation \eqref{DPPD-general} to the form
\begin{equation}\label{DPPD-L}
d^{k-1}P^{k}+P^{k+1}d^{k}=I_{V^{k}}-L^{k}, 
\end{equation}
for a family $L^\bs$ of linear maps $L^k:V^k\to V^k$. Observe that the the relation \eqref{DPPD-L} readily implies that $d^kL^k=d^kP^{k+1}d^k-d^k=L^{k+1}d^k$. Hence defining $W^k:=\ran(L^k)\subset V^k$, we conclude that $d^k(W^k)\subset W^{k+1}$, so this defines a subcomplex $(W^\bs, d^\bs)$ of $(V^\bs,d^\bs)$. The inclusions $W^k\to V^k$ induce a map between the cohomologies of the two complexes and the maps $L^k:V^k\to W^k$ define a map in the opposite direction. The relation \eqref{DPPD-L} easily implies that these maps induce isomorphisms in cohomology which are inverse to each other. 

There are different choices for $P^{\bs}$ and $L^{\bs}$ such that \eqref{DPPD-L} holds (actually, once we choose $P^{\bs}$, \eqref{DPPD-L} defines $L^{\bs}$). We should specify what properties we want $L^{\bs}$ to satisfy. Following \cite{costabel2010bogovskiui}, in this paper we are mainly concerned with the case where $L^{\bs}$ are smoothing operators. Before presenting details of this choice, we mention another possibility. In fact, the most efficient version of such a relation is when the resulting differentials on the complex $W^\bs$ are the zero maps, i.e.~if $d^kL^k=0$ for any $k$. Then the cohomology of $(V^\bs,d^\bs)$ is simply given by $\mathscr{H}^{i}=\ran(L^i)$.  Observe also, that starting from \eqref{DPPD-L} the additional conditions in \eqref{DPPD-general} are equivalent to $dL^0=0$ and $L^k=0$ for $k>0$. 

We have to remark that in the setting of the current article we will rarely meet the situation of bounded Poincar\'e operators which lead to $L$-operators that satisfy the strict relation $d^kL^k=0$. However, this case is very important in different situations, whence we decided to also discuss results on this case. The prime example for this situation comes from Hodge theory on smooth differential forms on compact Riemannian manifolds. Under this assumption, one in addition to the exterior derivative $d:\Omega^k(M)\to\Omega^{k+1}(M)$ obtains the codifferential $\delta:\Omega^k(M)\to\Omega^{k-1}(M)$ and hence the Hodge-Laplace operator $\Delta:=d\delta+\delta d$. Now Hodge theory states that for compact $M$ and each degree $k$ the kernel of $\Delta$ on $\Omega^k(M)$ equals $\ker(d)\cap\ker(\delta)$ and isomorphically projects onto the cohomology in degree $k$. The key towards proving this is the construction of a bounded operator (called Green's operator) $G:\Omega^k(M)\to\Omega^k(M)$ for each $k$ such that $\ker(G)=\ker(\Delta)$, $Gd=dG$ and $\delta G=G\delta$ and such that $G\Delta=I-L$, where $L$ is a projection onto $\ker(\Delta)$. In fact, $G$ can be defined by solving a Hodge-Laplacian problem
$$
(d\delta +\delta d)G(u):=P_{\mathfrak{H}^{\perp}} u,
$$
where $P_{\mathfrak{H}^{\perp}}$ is the projection to the ($L^{2}$) orthogonal complement of harmonic forms.
 This shows that defining $P:=\delta G$, one obtains $dP+Pd=G\Delta=I-L$ and of course $dL=0$.

 The main facts from Hodge theory can be generalized to the setting of complexes of densely defined unbounded operators between Hilbert spaces, see \cite{arnold2018finite,Arnold.D;Falk.R;Winther.R.2006a,Arnold.D;Falk.R;Winther.R.2010a}. The BGG machinery can also be applied in that setup and then the above discussion should apply to the twisted complex and BGG complexes obtained in that setting.

\medskip

For the case of the de Rham complex, we consider a domain $\Omega\subset\mathbb R^n$ (conditions on $\Omega$ will be imposed later). Then there is the smooth de Rham complex
\begin{equation*}
\begin{tikzcd}
0\arrow{r}& C^{\infty}\Lambda^{0} \arrow{r}{d^{0}}&C^{\infty}\Lambda^{1} \arrow{r}{d^{1}}&\cdots \arrow{r}{d^{n-1}}&C^{\infty}\Lambda^{n}\arrow{r}& 0.
\end{tikzcd}
\end{equation*}
  Here $V^{k}:=C^{\infty}\Lambda^{k}$ is the space of smooth differential $k$-forms and $d^{k}$ is the $k$-th exterior derivatives. Assuming that the domain $\Omega$ is star shaped around a point $x_0\in\Omega$, one can explicitly define a Poincar\'e operator. For $t\in [0,1]$, one defines a map $F_t:\Omega\to\Omega$ by $F_t(x)=tx+(1-t)x_0$. Hence $F_{1}$ is identity and $F_{0}$ is the  constant map $x_{0}$. Now for a $k$-form $u$ on $\Omega$, define 
\begin{equation}\label{smooth-poincare}
(\mathfrak{p} u)_x(\xi_2, \ldots, \xi_k) = \int_0^1 u_{F_t(x)}(\partial_tF_t(x), D F_t(x)\xi_2, \cdots, D F_t(x) \xi_k) \,d t.
\end{equation}
Then one verifies that $d\mathfrak{p}+\mathfrak{p}d=I$ for $k\geq 1$ and $ \mathfrak{p}du(x)=u(x)-u(x_{0})$ for $k=0$ \cite{cartan12701formes,christiansen2018generalized,lang2012fundamentals}. The existence of the $u(x_{0})$ term reflects the fact that the de~Rham complex is not exact at index zero ($d^{0}$ has constants as its kernel). This Poincar\'e operator is the standard tool for proving the Poincar\'e lemma (local exactness of the de~Rham complex) \cite{lang2012fundamentals}. Of course, if $\Omega$ has nontrivial cohomology then a Poincar\'e operator that satisfies \eqref{DPPD-general} cannot exist. 

For many applications, versions of the de Rham complex with non-smooth differential forms are needed. To fix the notation, let $\Omega$ be a bounded Lipschitz domain. Then we let $H^{q}(\Omega)$ be the Sobolev space with all derivatives of order  $\leq q$ in $L^{2}(\Omega)$, and $H_{0}^{q}(\Omega)$ be the closure of $C^{\infty}_{0}(\Omega)$ with the $H^{q}$ norm.    Moreover,  $H^{q}\Lambda^{k}(\Omega)$ is the space of $k$-forms with coefficients in $H^{q}$. Similarly, one can define $H^{q}_{0}\Lambda^{k}(\Omega)$. We also omit the domain $\Omega$ when there is no danger of confusion.  In the sequel, we will use $\|\cdot\|_{q}$ to denote the Sobolev $H^{q}$ norm, and $\|\cdot\|$ for the $L^{2}$ norm.

The following de Rham complex with Sobolev spaces plays an important role in the analysis of fluid and electromagnetic problems:
\begin{equation}\label{deRham-Hq}
\begin{tikzcd}
0\arrow{r}& H^{q}\Lambda^{0} \arrow{r}{d^{0}}&H^{q-1}\Lambda^{1} \arrow{r}{d^{1}}&\cdots \arrow{r}{d^{n-1}}&H^{q-n}\Lambda^{n}\arrow{r}& 0. 
\end{tikzcd}
\end{equation}
For the analytical applications, it is desirable to find a Poincar\'e operator such that each of the operators $P^i$ is bounded. The operators \eqref{smooth-poincare} cannot be used for \eqref{deRham-Hq} as they are not bounded between the Sobolev spaces.
Costabel and McIntosh \cite{costabel2010bogovskiui} derived a bounded Poincar\'e operator for \eqref{deRham-Hq} on bounded Lipschitz domains. This starts from the case of a domain $\Omega$ that is starlike with respect to all points in a ball $B$. Then one chooses a bump function (a non-negative function whose integral is $1$) $\theta$ with support in $B$ and averages the operators coming from \eqref{smooth-poincare} for the points $x_0\in B$ with a weight $\theta(x_0)$. It turns out that the resulting operators $P^\bs$ still satisfy the homotopy relation \eqref{DPPD-general}, but in addition, they are pseudodifferential operators of order $-1$ on $\mathbb R^n$. This implies favorable analytical properties in a wide range of spaces, e.g.,  $W^{k, p}$, Besov spaces and Triebel-Lizorkin spaces. For simplicity, we focus on Sobolev spaces $H^q$, for which obtains bounded operators $P^{i}: H^{q-i}  \Lambda^{i}\to H^{q-(i-1)}  \Lambda^{i-1}$. Moreover, the operators turn out to not only map smooth forms to smooth forms but also preserve spaces of forms with appropriate polynomial coefficients.  

A bounded Lipschitz domain $\Omega\subset\mathbb R^n$ then admits a finite covering by domains that are starlike with respect to all points in some open ball. One then glues together the Poincar\'e operators for these starlike domains using a partition of unity. It turns out that the resulting operators $P^\bs$ still are pseudodifferential operators of order $-1$, so analytical properties like boundedness properties are still available. They satisfy homotopy relations of the form \eqref{DPPD-L}, but the properties of the resulting operators $L^\bs$, which are caused by the glueing with a partition of unity, are slightly unexpected. They are not finite rank operators (so their images are much larger than the cohomology of the de Rham complex) but they are pseudodifferential operators of order $-\infty$ and thus smoothing operators. So additional work is needed to complete the analysis of the Sobolev de Rham complex \eqref{deRham-Hq}. This is based on the fact that, since $L^\bs$ is a smoothing operator, $I-L$ is invertible modulo compact operators. We remark that in \cite{costabel2010bogovskiui} also the compactly supported version, i.e.\ the Hodge duals of Poincar\'e operators are studied under the name {\it generalized Bogovski{\u\i} operators}. 


The Poincar\'e operators for \eqref{deRham-Hq} have several consequences and applications. 
\begin{itemize}
\item The complex \eqref{deRham-Hq} has a uniform representation of cohomology for any real number $q$, i.e., 
$$
\ker(d^{k}, H^{q-k}\Lambda^{k})=d^{k-1}H^{q-(k-1)}\Lambda^{k-1}\oplus \mathscr{H}^{k},
$$
where $\mathscr{H}^{k}$ is a linear subspace of $C^{\infty}\Lambda^{k}$ that is independent of $q$, see \cite{costabel2010bogovskiui}. Moreover, $\dim \mathscr{H}^{k}=\beta_{k}$, the $k$-th Betti number. The uniform representation of cohomology further implies analytical results, e.g., the Poincar\'e inequalities, regular decomposition and compactness \cite{arnold2021complexes}. 
\item The special case of $P^{n}$ implies that for any $q\in L^{2}$ satisfying $\int_{\Omega}q\,dx=0$, there exists $v\in [H_{0}^{1}]^{n}$ such that $\div v=q$, and $\|v\|_{1}\leq \|q\|$. This establishes the well-posedness of the Stokes problem (see, e.g., \cite{Girault.V;Raviart.P.1986a,temam1984navier}): given $f\in H^{-1}$, find $u\in [H_{0}^{1}]^{n}$ and $p\in L^{2}/\mathbb{R}$, such that
\begin{eqnarray*} 
-\Delta u+\nabla p &=& f,\\
\div u&=&0.
\end{eqnarray*}
\item The Poincar\'e operators can be used to construct polynomial complexes and show their exactness, which are needed for finite element methods. Examples include
\begin{equation}\label{polynomial-1}
\begin{tikzcd}
\cdots \arrow{r}& \mathcal{P}_{r-(k-1)}\Lambda^{k-1} \arrow{r}{d^{k-1}}& \mathcal{P}_{r-k}\Lambda^{k}  \arrow{r}{d^{k}}& \mathcal{P}_{r-(k+1)}\Lambda^{k+1}  \arrow{r}{}&\cdots, 
\end{tikzcd}
\end{equation}
and
\begin{equation}\label{polynomial-2}
\begin{tikzcd}
\cdots \arrow{r}& \mathcal{P}_{r }\Lambda^{k-1}+ \mathfrak{p}^{k}\mathcal{P}_{r }\Lambda^{k} \arrow{r}{d^{k-1}}& \mathcal{P}_{r }\Lambda^{k}+ \mathfrak{p}^{k+1}\mathcal{P}_{r }\Lambda^{k+1}\arrow{r}{d^{k}}& \mathcal{P}_{r }\Lambda^{k}+ \mathfrak{p}^{k+1}\mathcal{P}_{r }\Lambda^{k+1}  \arrow{r}{}&\cdots, 
\end{tikzcd}
\end{equation}
where $ \mathcal{P}_{r }\Lambda^{k}$ denotes the space of $k$-forms with polynomial coefficients of degree less than or equal to $r$ \cite{arnold2018finite,Arnold.D;Falk.R;Winther.R.2006a,hiptmair1999canonical}. See also, e.g., \cite{christiansen2018generalized,hu2020simple,hu2022family}, for applications to the construction of finite elements for Stokes problems.  The Poincar\'e operators \eqref{smooth-poincare} for the smooth de~Rham complex are sufficient for this purpose.
\item The bounded Poincar\'e operators \cite{costabel2010bogovskiui} for \eqref{deRham-Hq} preserve certain function classes, particularly polynomials. This fact is used to show the $p$-robustness (uniformity with respect to polynomial degrees) of high order finite element methods. For this application, it is not enough to know the exactness (cohomology) of the Sobolev de~Rham complexes, but the explicit form of the Poincar\'e operators is important.
\end{itemize}

For $n\leq 3$, the de~Rham complex can be written in terms of vector-matrix proxies on $\mathbb{R}^{n}$. This means that we can choose coordinates and identify differential forms with the (function) coefficients in front of the form bases (see \cite{arnold2018finite,Arnold.D;Falk.R;Winther.R.2006a} for more details). In particular, the proxy of the de~Rham complex in three dimensions involves $\grad$, $\curl$ and $\div$ operators:
\begin{equation*} 
\begin{tikzcd}
0\arrow{r}& C^{\infty}  \arrow{r}{\grad}&C^{\infty} \otimes \mathbb{R}^{3} \arrow{r}{\curl}&C^{\infty} \otimes\mathbb{R}^{3}\arrow{r}{\div}&C^{\infty} \arrow{r}& 0. 
\end{tikzcd}
\end{equation*}
Correspondingly, the Poincar\'e operators have the following vector-matrix proxies (without loss of generality, we have chosen 0 as the base point and the path $\gamma(t)=tx$ connecting $0$ and $x$; see, e.g., \cite[(7)-(8)]{hiptmair1999canonical}, \cite[(34)]{christiansen2018generalized}):
\begin{align}\label{P-3D}
P^{1}u(x)=\int_{0}^{1}u(tx)\cdot x\,dt, \quad P^{2}v(x)=\int_{0}^{1}tv(tx)\wedge x dt, \quad P^{3}w(x)=\int_{0}^{1}t^2w(tx)x \,dx.
\end{align}
In the proxies below, we use the following convention of notation:
\begin{equation}\label{dy-dot}
\int_{0}^{x}u(y)\cdot dy:=\int_{\gamma}u(y)\,dy=\int_{0}^{1}u(tx)\cdot x\,dt,
\end{equation}
where $\gamma(t):=tx$ for $t\in [0, 1]$ is as above. Similarly, 
 for smooth functions   $v=(v^{1}, v^{2}, v^{3})$ and $w$, 
\begin{equation}\label{dy-wedge}
\int_{0}^{x}dy\wedge v(y)=-\int_{0}^{x} v(y)\wedge dy:=\left (
\begin{array}{c}
\int_{0}^{1}tv_{3}(tx)x^{2}\,dt-\int_{0}^{1}tv_{2}(tx)x^{3}\,dt\\
\int_{0}^{1}tv_{1}(tx)x^{3}\,dt-\int_{0}^{1}tv_{3}(tx)x^{1}\,dt\\
\int_{0}^{1}tv_{2}(tx)x^{1}\,dt-\int_{0}^{1}tv_{1}(tx)x^{2}\,dt
\end{array}
\right ), 
\end{equation}
\begin{equation}\label{dy-tensor}
\int_{0}^{x} w(y)\otimes dy=\int_{0}^{x}dy\otimes w(y):=\int_{0}^{1}t^2w(tx)x\,dt.
\end{equation}

In this notation, \eqref{P-3D} becomes
\begin{align}\label{P-3D-new-notation}
P^{1}u(x)=\int_{0}^{x}u(y)\cdot dy, \quad P^{2}v(x)=\int_{0}^{x}v(y)\wedge dy, \quad P^{3}w(x)=\int_{0}^{x}w(y)\otimes dy.
\end{align}

 These operators are not bounded between the desired Sobolev spaces because the integral operators involve fixed base points.  

In elasticity, the Ces\`aro-Volterra integral operator provides an explicit potential for strain tensors satisfying the Saint-Venant compatibility condition. In terms of the elasticity complex, this can be viewed as a Poincar\'e operator in the elasticity complex at index one
\begin{equation}\label{elasticity-smooth}
\begin{tikzcd}
0\arrow{r}& C^{\infty}\otimes \mathbb{V}\arrow{r}{\deff} &  C^{\infty}\otimes \mathbb{S} \arrow{r}{\inc} & C^{\infty}\otimes \mathbb{S} \arrow{r}{\div} &C^{\infty}\otimes \mathbb{V}\arrow{r} & 0,  
\end{tikzcd}
\end{equation}
where $\deff u:=\frac{1}{2}(\nabla u+\nabla^{T}u)$ is the symmetric gradient, $\inc u:=\curl\circ \mathrm{T}\circ \curl u$ is the incompatibility operator (the Saint-Venant operator, or the linearized Einstein tensor up to a constant coefficient), and $\div$ is the column-wise divergence.  Here and in the sequel, $\mathbb V:=\mathbb R^n$ denotes vectors and $\mathbb M$ is the space of all $n\times n$-matrices. Let further $\mathbb S$ and $\mathbb T$ be the subspaces of matrices that are symmetric and trace-free, respectively.

 

\section{Poincar\'e operators for BGG complexes with trivial cohomology}\label{sec:construction}

In this section, we derive bounded Poincar\'e operators for the BGG complexes in the general (abstract) setting used in  \cite{vcap2022bgg}. For different parts of the construction in that reference different assumptions are needed, so we have to connect to the right point in the development. 

Let $Z^{i, j}, ~0\leq i\leq n, 0\leq j\leq N$ be Hilbert spaces and $d^{i, j}: Z^{i, j}\to Z^{i+1, j}$ be bounded linear operators such that for each $j$ we obtain a complex $(Z^{\bs,j},d^{\bs,j})$, i.e.~$d^{i+1,j}\circ d^{i,j}=0$ for all $i,j$. Assume further that we have given bounded linear operators $S^{i, j}: Z^{i, j}\to Z^{i+1, j-1}$ satisfying $d^{i+1, j-1}S^{i, j}=-S^{i+1, j}d^{i, j}$, $S^{i+1, j-1}\circ S^{i, j}=0, \forall i, j$ (or, in short,  $dS=-Sd$ and $S^{2}=0$). At this stage, we don't need or use the operators called $K^{i,j}$ in \cite{vcap2022bgg}.

Let $Y^{i}:=Z^{i, 0}\times \cdots \times Z^{i, N}$. Following the convention in \cite{vcap2022bgg}, we use $S^{i}: Y^{i}\to Y^{i+1}$ to denote the vectorized operator, i.e., for any $u^{i, j}\in Z^{i, j}$,
 $$
 S^{i}(u^{i, 0}, u^{i, 1}, \cdots, u^{i, N}):=
 \left ( 
 \begin{array}{ccccc}
 0 & S^{i, 1} & &&\\
  &0 & S^{i, 2} & &\\
  & & \ddots &\ddots &\\
    &  &  &0 &S^{i, n} \\
  &  & & &0
 \end{array}
 \right )
 \left (
 \begin{array}{c}
 u^{i, 0}\\ u^{i, 1}\\ \vdots\\ u^{i, N-1} \\u^{i, N}
 \end{array}
 \right )
 = (S^{i, 1}u^{i, 1}, \cdots, S^{i, N}u^{i, N}, 0).
 $$
 These ingredients are sufficient to define the \textit{twisted complex}: Defining the twisted operator $d_{V}:=d-S$, the twisted complex is
 \begin{equation}\label{A-sequence-0}
\begin{tikzcd}
\cdots \arrow{r}{} &Y^{i-1} \arrow{r}{d_{V}^{i-1}} &Y^{i} \arrow{r}{d_{V}^{i}} & Y^{i+1}  \arrow{r}{} & \cdots,
 \end{tikzcd}
 \end{equation}
Also, the construction of the BGG complex from the twisted complex \cite{vcap2022bgg} can be carried out with the ingredients we have at hand so far, the $K$ operators are not needed in this construction. 

Now we assume in addition that for each of the initial complexes $(Z^{\bs,j},d^{\bs,j})$, we have given bounded Poincar\'e operators. This means that for each $i,j$, we have given a bounded operator $P^{i, j}: Z^{i, j}\to Z^{i-1, j}, i=1, 2, \cdots, n$, such that 
\begin{equation}\label{DPPD-input}
d^{i-1, j}P^{i, j}+P^{i+1, j}d^{i, j}=I_{Z^{i, j}}  \text{\ for\ }i>0\text{\ and\ }d^{0,j}P^{1,j}d^{0,j}=d^{0,j}.
\end{equation}
As noted in Section \ref{sec:de Rham} already, this implies that the cohomologies of all the complexes are trivial. In typical examples, each of the complexes $(Z^{\bs,j},d^{\bs,j})$ is a vector-valued de Rham complex on a star shaped domain, and appropriate Poincar\'e operators are provided by \cite{costabel2010bogovskiui}. As above, we define $P^{i}: Y^{i}\to Y^{i-1}$ as the vectorized component-wise version of $P^{i, j}$.

Our setup is summarized in the following diagram:
 \begin{equation}\label{diag:multi-rows}
\begin{tikzcd}[row sep=large, column sep = huge]
0 \arrow{r} &Z^{0, 0}  \arrow[r, "d^{0, 0}", shift left=1]  &\arrow{l}{P^{1, 0}}Z^{1, 0} \arrow[r, "d^{1, 0}", shift left=1] &\arrow{l}{P^{2, 0}}\cdots \arrow[r, "d^{n-1, 0}", shift left=1] & \arrow{l}{P^{n, 0}} Z^{n, 0} \arrow{r}{} & 0\\
0 \arrow{r}&Z^{0, 1} \arrow[r, "d^{0, 1}", shift left=1]  \arrow[ur, "S^{0, 1}"]&\arrow{l}{P^{1, 1}}Z^{1, 1}  \arrow[r, "d^{1, 1}", shift left=1]  \arrow[ur, "S^{1, 1}"]&\arrow{l}{P^{2, 1}}\cdots  \arrow[r, "d^{n-1, 1}", shift left=1] \arrow[ur, "S^{n-1, 1}"]&\arrow{l}{P^{n, 1}} Z^{n, 1}\arrow{r}{} & 0\\
 & \cdots \arrow[ur, " S^{0, 2}"]&  \cdots\arrow[ur, "S^{1, 2} "]&\cdots \arrow[ur, "S^{n-1, 2} "] &\cdots & \\
 0 \arrow{r}&Z^{0, N} \arrow[r, "d^{0, N}", shift left=1]  \arrow[ur, "S^{0, N}"]&\arrow{l}{P^{1, N}}Z^{1, N}  \arrow[r, "d^{1, N}", shift left=1]  \arrow[ur, "S^{1, N}"] &\arrow{l}{P^{2, N}}\cdots  \arrow[r, "d^{n-1, N}", shift left=1] \arrow[ur, "S^{n-1, N}"] &\arrow{l}{P^{n, N}} Z^{n, N}\arrow{r}{} & 0.
 \end{tikzcd}
\end{equation}

\begin{remark}\label{rem:K}
In the following developments, the composition $PS$ will play an important role. The components of this are $P^{i+1,j-1}\circ S^{i,j}:Z^{i,j}\to Z^{i,j-1}$, so the map between the same spaces as the operators $K^{i,j}$ used in \cite{vcap2022bgg}. As we shall see in the proof of Lemma \ref{lem:I-PS}, the analogy goes further, since $d(PS)-(PS)d=S$. Hence it is tempting to view $PS$ as a ``replacement'' for the $K$ operators. This is even more tempting in the situation of diagrams with two rows from \cite{arnold2021complexes} since in that case $e^{-K}=1-K$ which looks even close to what we are doing here. One has to be aware of the fact that there are substantial and important differences between the two situations, in particular in the examples derived from de Rham complexes. In this situation the $K$ operators from \cite{vcap2022bgg} are local operators defined on all of $\Bbb R^n$ and can be used on \textit{any} domain in $\Bbb R^n$ simply by restriction. They induce isomorphisms between the sum of the initial complexes and the twisted complex on any domain, regardless of whether the topology is trivial or not. In contrast, the version of Poincar\'e operators we need here is only available for star-shaped domains and depends on the global behavior on the domain. Also the isomorphisms between the sum of the original complexes and the twisted complex is constructed in an essentially different way here. 
  \end{remark}

We will derive the Poincar\'e operators for the BGG complex in two steps. The first step is to construct bounded Poincar\'e operators for the twisted complex $(Y^{\bs}, d_{V}^{\bs})$. The second step is then to pass to the BGG complex $(\Upsilon^{\bs}, \mathscr{D}^{\bs})$. The derivation is sketched in the following diagram:
\begin{equation}\label{diagram-two-steps}
\begin{tikzcd}[swap]
Y^{i}\arrow{d}{ F} \arrow[r, "d^{i}", shift right=1] &\arrow{l}{P^{i+1}}Y^{i+1}\arrow{d}{ F}\\
Y^{i} \arrow[d, "B^{i}", shift right=1] \arrow[r, "d_{V}^{i}", shift right=1]&\arrow{l}{P_{V}^{i+1}}Y^{i+1}\arrow[d, "B^{i+1}", shift right=1] \\
\Upsilon^{i}\arrow[r, "\mathscr{D}^{i}", shift right=1]\arrow{u}{A^{i}}&\arrow{l}{\mathscr{P}^{i+1}}\Upsilon^{i+1}\arrow{u}{A^{i+1}}
\end{tikzcd}
\end{equation}
Here $F$ is an isomorphism of complexes. On the other hand, $A$ and $B$ are morphisms of complexes which induce isomorphisms in cohomology. 

Details will be explained in the rest of this section.

\subsection{Poincar\'e operators for the twisted complex}\label{sec:twisted}
We first use the Poincar\'e operators for the initial complexes to construct an isomorphism between the sum of these complexes and the twisted complex.

 \begin{lemma}\label{lem:I-PS}
We have $d(I-PS)=(I-PS)d_{V}$. 
 \end{lemma}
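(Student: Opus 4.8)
The plan is to verify the operator identity $d(I - PS) = (I - PS)d_V$ by direct expansion, recalling that $d_V = d - S$. The left-hand side expands to $d - dPS$, and the right-hand side expands to $(I-PS)(d-S) = d - S - PSd + PSS$. Since we are given $S^2 = 0$ (so $PSS = 0$), establishing the lemma reduces to proving the single identity $dPS = S + PSd$, i.e., $dPS - PSd = S$. This is the assertion already previewed in Remark \ref{rem:K}, so the real content of the proof is exactly this commutator computation, and everything else is bookkeeping about how the vectorized operators $d$, $S$, $P$ on the $Y^\bs$ act componentwise.

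First I would reduce to the componentwise level. The operators $d$, $S$, $P$ on $Y^\bs = \bigoplus_j Z^{\bs,j}$ act as $d = (d^{\bs,j})_j$ diagonally in $j$, $P = (P^{\bs,j})_j$ diagonally in $j$, and $S$ shifts $j \mapsto j-1$ via the $S^{i,j}: Z^{i,j} \to Z^{i+1,j-1}$. So the composite $PS$ has components $P^{i+1,j-1} \circ S^{i,j} : Z^{i,j} \to Z^{i,j-1}$, and I want to check, on each component $Z^{i,j}$, that
\begin{equation*}
d^{i,j-1} P^{i+1,j-1} S^{i,j} - P^{i+1,j-1} S^{i,j} d^{i,j} \;\text{(plus, for }i>0,\text{ a term)} = S^{i,j}.
\end{equation*}
The natural tool is the Poincar\'e identity \eqref{DPPD-input} applied in row $j-1$ at index $i+1$: $d^{i,j-1}P^{i+1,j-1} + P^{i+2,j-1}d^{i+1,j-1} = I_{Z^{i+1,j-1}}$. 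Composing this identity on the right with $S^{i,j}$ gives $d^{i,j-1}P^{i+1,j-1}S^{i,j} + P^{i+2,j-1}d^{i+1,j-1}S^{i,j} = S^{i,j}$. Now I invoke the anticommutation relation $d^{i+1,j-1}S^{i,j} = -S^{i+1,j}d^{i,j}$, which turns the second term into $-P^{i+2,j-1}S^{i+1,j}d^{i,j} = -(PS)^{i+1,j}d^{i,j}$ — precisely the term $PSd$ applied to $Z^{i,j}$. Rearranging yields $d^{i,j-1}P^{i+1,j-1}S^{i,j} - (PS d)$-component $= S^{i,j}$, which is $dPS - PSd = S$ as desired.

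The step I expect to require the most care is the index-zero / boundary bookkeeping, since \eqref{DPPD-input} has the weaker form $d^{0,j}P^{1,j}d^{0,j} = d^{0,j}$ in degree zero rather than a full inverse identity, and the vectorized $S^i$ is not defined for all indices. Concretely, when $i+1$ ranges to the top index $n$ one must check the relevant Poincar\'e identity is still available in the row being used, and when $i=0$ one must confirm the identity $d(I-PS) = (I-PS)d_V$ still holds as a map $Y^0 \to Y^1$ despite only having the degenerate degree-zero relation — here one composes $d^{0,j-1}P^{1,j-1}d^{0,j-1} = d^{0,j-1}$ appropriately rather than the full homotopy identity, or alternatively one simply multiplies the claimed operator identity on the left by the relevant $d$ to reduce to a statement that only uses the available relations. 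The claims $S^2=0$ and $d S = -S d$ are used exactly once each and cause no difficulty; the whole argument is a few lines once the componentwise translation is set up.
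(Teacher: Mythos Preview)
Your argument is correct and essentially the same as the paper's: expand both sides, use $S^2=0$, and reduce to the commutator relation $dPS - PSd = S$, which follows from the Poincar\'e identity $(dP+Pd)S = S$ together with $dS = -Sd$. The paper does exactly this in one line, computing $(I-PS)d_V$ first and arriving at $d(I-PS)$.

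Your worry about the degree-zero boundary case is unnecessary. In the step where you invoke the homotopy identity, you apply $dP+Pd$ to the image of $S^{i}: Y^i \to Y^{i+1}$, i.e.\ always in degree $i+1 \geq 1$, where the full relation $d^{i,j-1}P^{i+1,j-1} + P^{i+2,j-1}d^{i+1,j-1} = I$ from \eqref{DPPD-input} is available. The weaker degree-zero condition $d^{0,j}P^{1,j}d^{0,j}=d^{0,j}$ is never needed for this lemma, so the extra bookkeeping you anticipate does not arise.
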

 \begin{proof}
We have
 $$
(I-PS)d_{V}= (I-PS)(d-S)=d-S+PdS=d-dPS=d(I-PS),
 $$
 where we have used $dP+Pd=I$, $S^{2}=0$ and  $dS=-Sd$. This implies the desired result.
 \end{proof}

Since the bounded operator $PS$ maps each of the spaces $Z^{i,j}$ to $Z^{i,j-1}$ it is evidently nilpotent and hence $I-PS$ is invertible. Define $F^{i}:=(I-P^{i+1}S^{i})^{-1}=\sum_{l=0}^{\infty}(P^{i+1}S^{i})^{l}$, noting that the sum is finite since $PS$ is nilpotent. Lemma \ref{lem:I-PS} implies that the following diagram commutes:
\begin{equation} \label{KddVK}
\begin{tikzcd}
\cdots \arrow{r}{} &Y^{i}\arrow{d}{F^{i}} \arrow{r}{d} &Y^{i+1} \arrow{d}{F^{i+1}}\arrow{r}{} & \cdots\\
\cdots \arrow{r}{} &Y^{i}\arrow{r}{d_{V}} &Y^{i+1}\arrow{r}{} & \cdots.
 \end{tikzcd}
\end{equation}

 From the commuting diagram \eqref{KddVK}, we immediately get bounded Poincar\'e operators for the twisted complex. 
  \begin{theorem}\label{thm:twisted}
Define 
\begin{equation}\label{def:PV}
P_{V}^{i}:=F^{i-1}P^{i}(F^{i})^{-1}=\sum_{l=0}^{\infty}(P^{i}S^{i-1})^{l} P^{i}(I-P^{i+1}S^{i}).
\end{equation}
We have 
\begin{equation}\label{poincare-twisted}
 d^{i-1}_{V}P^i_{V}+P^{i+1}_{V}d^i_{V}=I \text{\ for\ }i>0\text{\ and\ } d^0_VP^1_Vd^0_V=d^0_V.
\end{equation} 
\end{theorem}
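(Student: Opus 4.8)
The plan is to recognize $F^\bs = \{F^i\}$, $F^i := (I-P^{i+1}S^i)^{-1}$, as an \emph{isomorphism of complexes} from $(Y^\bs,d)$ to $(Y^\bs,d_V)$ and to transport the null-homotopy relation \eqref{DPPD-input} through it. First I would make the intertwining relations explicit. Writing $G^i := (F^i)^{-1} = I-P^{i+1}S^i$, Lemma~\ref{lem:I-PS} says $d^iG^i = G^{i+1}d_V^i$; conjugating this identity gives the two relations I will actually use,
\begin{equation*}
F^{i+1}d^i = d_V^i F^i, \qquad d^i(F^i)^{-1} = (F^{i+1})^{-1}d_V^i,
\end{equation*}
which is exactly the statement that the diagram \eqref{KddVK} commutes (recall that $I-P^{i+1}S^i$ is invertible because $PS$ lowers the second index and hence is nilpotent, so no extra analytic input is needed here).

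The second step is a direct computation. For $i>0$, substituting the definition $P_V^i = F^{i-1}P^i(F^i)^{-1}$ and pushing the differentials through $F^\bs$ by means of the relations above would give
\begin{equation*}
d_V^{i-1}P_V^i + P_V^{i+1}d_V^i = F^i d^{i-1}P^i(F^i)^{-1} + F^i P^{i+1}d^i(F^i)^{-1} = F^i\bigl(d^{i-1}P^i + P^{i+1}d^i\bigr)(F^i)^{-1} = I,
\end{equation*}
the last equality being \eqref{DPPD-input} applied componentwise (the vectorized $d^\bs$ and $P^\bs$ are block diagonal, so $dP+Pd=I$ holds row by row). For $i=0$ the same manipulation reduces $d_V^0P_V^1d_V^0$ to $F^1(d^0P^1d^0)(F^0)^{-1} = F^1 d^0(F^0)^{-1} = d_V^0$, using the degree-zero part of \eqref{DPPD-input}. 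Boundedness of $P_V^i$ is then immediate, since $F^{i-1} = \sum_{l\ge0}(P^iS^{i-1})^l$ is a \emph{finite} sum of bounded operators, $(F^i)^{-1} = I-P^{i+1}S^i$ is bounded, and $P^i$ is bounded by hypothesis; multiplying these three factors out also yields the explicit expression for $P_V^i$ recorded in \eqref{def:PV}.

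I do not expect a genuine obstacle: this is the standard ``conjugate a contracting homotopy by a chain isomorphism'' argument, and all of its ingredients (the commuting diagram \eqref{KddVK}, the null-homotopy \eqref{DPPD-input}, nilpotency of $PS$) are already at hand. The only points needing care are bookkeeping ones — getting the direction of the chain map right (Lemma~\ref{lem:I-PS} is phrased with $I-PS$ mapping the $d_V$-complex to the $d$-complex, so $F=(I-PS)^{-1}$ runs the other way) and keeping the vectorized operators $d^\bs$, $P^\bs$, $S^\bs$ straight so that the componentwise identities \eqref{DPPD-input} may be invoked.
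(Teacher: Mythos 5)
Your proposal is correct and is exactly the paper's argument: the paper's proof is the one-line instruction to conjugate the input null-homotopy \eqref{DPPD-input} by $F$ using Lemma~\ref{lem:I-PS}, and your write-up simply spells out that conjugation, the intertwining relations $F^{i+1}d^i = d_V^iF^i$, and the degree-zero case. No divergence in approach and no gap.
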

\begin{proof}
 Exert $F$ on the left and $F^{-1}$ on the right of \eqref{DPPD-input} and use Lemma \ref{lem:I-PS}.
\end{proof}

 In the special case where the diagram has two rows ($N=2$), $F^{i}$ has the form
$$
F^{i}=
\left (
\begin{array}{cc}
I & P^{i+1}S^{i}\\
0 & I
\end{array}
\right ).
$$ 
 
Now the definition \eqref{def:PV} of $P_{V}$ boils down to $P_{V}=(I+PS)P(I-P S )$. If $P\circ P=0$, we  further have $P_{V}=P+PSP$, which has the matrix form
\begin{equation}\label{PV}
P_{V}=\left ( 
\begin{array}{cc}
I & PS\\
0 & I
\end{array}
\right )\left ( 
\begin{array}{cc}
P & 0 \\
0 & P
\end{array}
\right )\left ( 
\begin{array}{cc}
I & -PS\\
0 & I
\end{array}
\right )=\left ( 
\begin{array}{cc}
P & -P(PS-SP)\\
0 & P
\end{array}
\right ).
\end{equation}
 

\subsection{BGG complexes}\label{sec:bgg}

For this step, we have to make an additional assumption, namely that for each $i,j$, the operator $S^{i,j}:Z^{i,j}\to Z^{i+1,j-1}$ has closed range. This of course implies that also $S^i:Y^i\to Y^{i+1}$ has closed range. The spaces showing up in the BGG complex associated to the twisted complex are then closed linear subspaces of the spaces $Y^i$ that form the twisted complex. One defines $\Upsilon^{i}:=\ker(S^{i})\cap \ran(S^{i-1})^{\perp}$, or more explicitly, 
$$
\Upsilon^{i}:=\{(u_{0}, \cdots u_{N})\in Y^{i}: u_{0}\in \ran(S^{i-1, 1})^{\perp}, ~u_{\ell}\in \ran(S^{i-1, \ell+1})^{\perp}\cap \ker(S^{i, \ell}), \ell=2, \cdots, N-1, ~ u_{N}\in \ker(S^{i, N})\}.
$$
Intuitively, $\Upsilon^{\bs}$ comes from eliminating (as much as possible) subspaces that are connected isomorphically by the $S$ operators from \eqref{diag:multi-rows}. One then defines bounded operators
 \begin{equation}\label{reduced-complex}
\begin{tikzcd}
 \cdots \arrow{r}{ }&\Upsilon^{i-1} \arrow{r}{\mathscr{D}^{i-1}}  &\Upsilon^{i} \arrow{r}{\mathscr{D}^{i}}   &\Upsilon^{i+1}\to \cdots
 \end{tikzcd}
 \end{equation}
 as follows. Since $S^i\circ S^{i-1}=0$, we conclude that $\ran(S^{i-1})$ is a closed linear subspace of $\ker(S^i)$ and indeed $\ker(S^i)=\ran(S^{i-1})\oplus\Upsilon^i$. Let us denote by $\Pi_{\ker(S^{i})}$, $\Pi_{\ran(S^{i})^{\perp}}$ and $\Pi_{\Upsilon^{i}}$ be the (orthogonal) projections onto $\ker(S^{i})$, $\ran(S^{i})^{\perp}$ and $\Upsilon^{i}$, respectively. For simplicity of notation, we will simply write $\Pi_{\ker}$, $\Pi_{\ran^{\perp}}$ and $\Pi_{\Upsilon}$, when there is no danger of confusion.
Recall from \cite{vcap2022bgg} that we define $T:=(S|_{\ker(S)^\perp})^{-1}\Pi_{\ran(S)}$ and denote its components by $T^{i, j}: Z^{i, j}\to Z^{i-1, j+1}$, so $T^{i, j}:=(S^{i-1, j+1}|_{\ker(S)^\perp})^{-1}\circ \Pi_{\ran(S^{i-1, j+1})}$. Then $G^{i}: Z^{i}\to Z^{i-1}$ is defined by $G^{i}:=-\sum_{k=0}^{\infty}(T^{i}d^{i-1})^{k}T^{i}$. Next, one defines $A^i:\Upsilon^i\to Y^i$ by $A^{i}:=I-G^{i+1}d_{V}^{i}$ and the operators $\mathscr{D}^{\bs}$ in \eqref{reduced-complex} are given by $\mathscr{D}^{i}:=P_{\Upsilon^{i+1}}d_{V}^{i}A^{i}$. It then turns out that $d^i_VA^i=A^{i+1}\mathscr{D}^{i}$ so $A$ defines a cochain map from the sequence \eqref{reduced-complex} to the twisted complex. Via this, the fact that $d^{i+1}_V\circ d^i_V=0$ easily implies that also \eqref{reduced-complex} is a complex, and hence $A$ induces a map in cohomology. Finally,  $B^{i}: Y^{i}\to \Upsilon^{i}$ defined by $B^{i}:=\Pi_{\Upsilon^{i}}(I-d_{V}^{i-1}G^{i})$ provides a cochain map in the other direction, such that $BA=I$. One also shows that $AB$ is chain homotopic to the identity, whence these two chain maps induced isomorphisms in cohomology, which are inverse to each other. 

Recalling the diagram \eqref{diagram-two-steps}, we define the composition of chain maps $\pi^{i}:=B^{i} F^{i}$ and $\pi_{\dagger}^{i}:=(F^{i})^{-1} A^{i}$. Now we define Poincar\'e operators for the BGG complex by 
\begin{align}\label{def:P}
\mathscr{P}^{i}:= B^{i-1}P^{i}_{V} A^{i}= \pi^{i-1} P^{i} \pi_{\dagger}^{i}.
\end{align}
\begin{theorem}
The Poincar\'e operators satisfy 
$$
\mathscr{D}^{i-1}\mathscr{P}^{i}+\mathscr{P}^{i+1}\mathscr{D}^{i}=I { \text{\ for\ }i>0\text{\ and\ }\mathscr{D}^0\mathscr{P}^1\mathscr{D}^0=\mathscr{D}^0.}
$$
\end{theorem}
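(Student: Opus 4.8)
The plan is to reduce the statement to the homotopy identity already established on the twisted complex (Theorem~\ref{thm:twisted}, or equivalently the input relations \eqref{DPPD-input} on the summed complex) by transporting it along the chain maps connecting the three rows of the diagram \eqref{diagram-two-steps}.

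First I would record the intertwining identities that are available. The operator $F$ is an isomorphism of complexes from $(Y^{\bs},d^{\bs})$ to $(Y^{\bs},d_V^{\bs})$, i.e.\ $d_V F=Fd$ by \eqref{KddVK}, hence also $dF^{-1}=F^{-1}d_V$. The operators $A$ and $B$ are cochain maps between $(\Upsilon^{\bs},\mathscr{D}^{\bs})$ and $(Y^{\bs},d_V^{\bs})$, so $d_V^i A^i=A^{i+1}\mathscr{D}^i$ and $\mathscr{D}^i B^i=B^{i+1}d_V^i$, and $BA=I$. Consequently $\pi^i:=B^iF^i$ and $\pi_\dagger^i:=(F^i)^{-1}A^i$ are cochain maps $(Y^{\bs},d^{\bs})\to(\Upsilon^{\bs},\mathscr{D}^{\bs})$ and $(\Upsilon^{\bs},\mathscr{D}^{\bs})\to(Y^{\bs},d^{\bs})$, respectively, satisfying $\mathscr{D}^{i-1}\pi^{i-1}=\pi^i d^{i-1}$, $d^i\pi_\dagger^i=\pi_\dagger^{i+1}\mathscr{D}^i$, and $\pi^i\pi_\dagger^i=B^iF^i(F^i)^{-1}A^i=B^iA^i=I$. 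By \eqref{def:P} we have $\mathscr{P}^i=\pi^{i-1}P^i\pi_\dagger^i$, where $P^{\bs}$ is the vectorized component-wise de~Rham-type Poincar\'e operator on $Y^{\bs}$ obeying the vectorized version of \eqref{DPPD-input}.

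The computation for $i>0$ is then a one-line diagram chase: substitute $\mathscr{P}^i=\pi^{i-1}P^i\pi_\dagger^i$ and $\mathscr{P}^{i+1}=\pi^iP^{i+1}\pi_\dagger^{i+1}$, move the outer $\mathscr{D}$'s inward via the cochain-map identities so that they act as $d$'s on $Y^{\bs}$, factor out $\pi^i(\cdots)\pi_\dagger^i$, apply $d^{i-1}P^i+P^{i+1}d^i=I$ from \eqref{DPPD-input}, and conclude with $\pi^i\pi_\dagger^i=I$. For the degree-zero assertion, $\mathscr{D}^0\mathscr{P}^1\mathscr{D}^0=\mathscr{D}^0\pi^0P^1\pi_\dagger^1\mathscr{D}^0$; pushing $\mathscr{D}^0$ past $\pi^0$ on the left and past $\pi_\dagger^1$ on the right turns it into $\pi^1 d^0P^1d^0\pi_\dagger^0$, so the remaining relation $d^0P^1d^0=d^0$ from \eqref{DPPD-input} gives $\pi^1 d^0\pi_\dagger^0=\pi^1\pi_\dagger^1\mathscr{D}^0=\mathscr{D}^0$.

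I do not expect a genuine obstacle here: the result is a formal consequence of Theorem~\ref{thm:twisted} together with the chain-map properties of $A$, $B$ and $F$ collected in Section~\ref{sec:bgg}. The only points needing care are bookkeeping — checking that the intertwining identities are quoted in the correct degrees, including the boundary degree $i=0$ where $\Upsilon^0=\ker(S^0)$ and $A^0,B^0$ are still defined, and invoking the two parts of \eqref{DPPD-input} (the homotopy identity for $i>0$ and the separate degree-zero identity) in the right places. Alternatively one can run the same chase directly through the twisted complex, writing $\mathscr{P}^i=B^{i-1}P_V^iA^i$ and using \eqref{poincare-twisted} in place of \eqref{DPPD-input}; this is the identical argument with $F$ absorbed into $P_V$.
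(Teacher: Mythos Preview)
Your proposal is correct and follows essentially the same route as the paper: the paper uses $\mathscr{P}=BP_VA$ together with $\mathscr{D}B=Bd_V$, $A\mathscr{D}=d_VA$, and $BA=I$ to reduce to \eqref{poincare-twisted}, which is exactly your alternative formulation at the end, while your main write-up just unfolds one layer further to $\pi,\pi_\dagger$ and \eqref{DPPD-input}. Your explicit handling of the degree-zero identity is a minor addition over the paper's terse ``this immediately implies the claim.''
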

\begin{proof}
By definition, we have
$$
\mathscr{D}\mathscr{P}+\mathscr{P}\mathscr{D}=\mathscr{D}BP_{V}A+BP_{V}A\mathscr{D}.
$$
Now $A^{\bs}$ and $B^{\bs}$ are cochain maps, meaning that $\mathscr{D}B=Bd_{V}$ and $A\mathscr{D}=d_{V}A$. Moreover, $BA=I$ \cite[p.9]{vcap2022bgg}.  This immediately implies the claim.
\end{proof}

In the case where the diagram has two rows ($N=2$),
\begin{equation}\label{ABN2}
A=\left (
\begin{array}{cc}
I & 0\\
Td & \Pi_{\ker}
\end{array}
\right ), \quad 
B=\left ( 
\begin{array}{cc}
\Pi_{\ran^{\perp}} & 0\\
\Pi_{\ker}dT & \Pi_{\ker}
\end{array}
\right ),
\end{equation}
and
\begin{align*}
\mathscr{P}=BP_{V}A&=\left ( 
\begin{array}{cc}
\Pi_{\ran^{\perp}} & 0\\
\Pi_{\ker}dT & \Pi_{\ker}
\end{array}
\right )\left ( 
\begin{array}{cc}
P & -P(PS-SP)\\
0 & P
\end{array}
\right )
\left (
\begin{array}{cc}
I & 0\\
Td & \Pi_{\ker}
\end{array}
\right )\\
&=\left (
\begin{array}{cc}
\Pi_{\ran^{\perp}}(P-P(PS-SP)Td) & -\Pi_{\ran^{\perp}}P(PS-SP)\Pi_{\ker}\\
\Pi_{\ker}dT(P-P(PS-SP)Td)+\Pi_{\ker}PTd & -\Pi_{\ker}dTP(PS-SP)\Pi_{\ker}+\Pi_{\ker}P\Pi_{\ker}
\end{array}
\right ).
\end{align*}
 The explicit formula has several terms, but in examples, many of the $S$ operators are injective or surjective. This will simplify the formulas. More precisely, if $S^{i, j}: Z^{i, j}\to Z^{i+1, j-1}$ is injective, then $Z^{i, j}$ is removed from the BGG complex $(\Upsilon^{\bs}, \mathscr{D}^{\bs})$, i.e., the $i$-th component of $\Upsilon^{j}$ is zero. Similarly, if $S^{i, j}: Z^{i, j}\to Z^{i+1, j-1}$ is surjective, then $Z^{i+1, j-1}$ is removed.
 

\section{Examples}\label{sec:examples}
In this section, we present the construction of Poincar\'e operators for the examples derived from representation theory in \cite{vcap2022bgg}. In particular, we verify that the assumptions in the previous section are satisfied. Moreover, in this setup, we discuss additional properties such as the polynomial-preservation and the complex property.



\subsection{The setting of \cite{vcap2022bgg}}\label{sec:repr-bgg}
Let us first describe explicitly how to apply the general scheme derived in Section \ref{sec:construction} to the BGG complexes constructed {using representation theory} in \cite{vcap2022bgg}. We will also deduce that in this setting the resulting operators preserve appropriately chosen subspaces of polynomial maps which is important in applications to finite element methods. The basic input for the construction in  \cite{vcap2022bgg} (in dimension $n$) is the choice of a representation $\mathbb V$ of $O(n)$ which decomposes as $\mathbb V=\oplus_{i=0}^N\mathbb V_i$ into $O(n)$-invariant subspaces. {For appropriate choices of $\mathbb V$,} representation theory provides us with linear maps $\partial^{i.j}:\Lambda^i\mathbb R^{n*}\otimes\mathbb V_j\to\Lambda^{i+1}\mathbb R^{n*}\otimes\mathbb V_{j-1}$ such that $\partial^{i+1,j-1}\circ\partial^{i.j}=0$. Then one fixes a number $q\in\mathbb R$ and a bounded domain $\Omega\subset\mathbb R^n$ and defines $Z^{i,j}:=H^{q-i-j}(\Omega,\Lambda^i\mathbb R^{n*}\otimes\mathbb V_j)$, which can be viewed as the Sobolev space of differential $i$-forms on $\Omega$ with values in the vector space $\mathbb V_j$ and Sobolev regularity $q-i-j$. In particular, for each $i,j$, the exterior derivative defines a bounded linear operator $d^{i,j}:Z^{i,j}\to Z^{i+1,j}$, because the Sobolev regularity in the target is one lower than in the source. On the other hand, the Sobolev regularities in $Z^{i,j}$ and $Z^{i+1,j-1}$ are the same, so $\partial^{i.j}$ induces a bounded linear operator $S^{i,j}:Z^{i,j}\to Z^{i+1,j-1}$ with closed range. It is shown in \cite{vcap2022bgg} that all the assumptions we made in Section \ref{sec:construction} are satisfied and that the cohomology of $(Y^\bs,d^\bs_V)$ is isomorphic to the direct sum of the cohomologies of the complexes $(Z^{\bs,j},d^{\bs,j})$.

In this situation, appropriate Poincar\'e operators for the de Rham complex are constructed in the work \cite{costabel2010bogovskiui} of Costabel and McIntosh. For a domain $\Omega\subset\mathbb R^n$ which is starlike with respect to all points in some open ball, they construct integral operators $R_\ell$ on compactly supported $\ell$-forms on $\Bbb R^n$ which satisfy appropriate null homotopy relations and by Theorem 3.2 of that reference, these are pseudodifferential operators of order $-1$ with symbol in an appropriate H\"ormander class. In particular, for bounded domain which is starlike to some ball, they induce bounded linear operators $H^s(\Omega,\Lambda^\ell\mathbb R^{n*})\to H^{s+1}(\Omega,\Lambda^{\ell-1}\mathbb R^{n*})$ that satisfy appropriate homotopy relations. Now of course we can also identify $Z^{i,j}$ with $H^{q-i-j}(\Omega,\Lambda^i\mathbb R^{n*})\otimes\mathbb V_j$, so tensorizing with the identity on $\mathbb V_j$, the operator $R_i$ gives rise to bounded linear operators $P^{i,j}:Z^{i,j}\to Z^{i-1,j}$ for each $i$ and $j$. Hence the construction in Section \ref{sec:construction} applies and we get

\begin{theorem}\label{thm:bounded-poincare}
Let $\Omega\subset\Bbb R^n$ be a bounded domain which is starlike with respect to an open ball. Then for any representation $\mathbb V$ as discussed in Section 3 of \cite{vcap2022bgg} there is a bounded Poincar\'e operator for both the twisted complex and the BGG complex on $\Omega$ constructed from $\Bbb V$ as in that reference.   
\end{theorem}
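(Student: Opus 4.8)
The plan is to verify that the concrete data $(Z^{i,j}, d^{i,j}, S^{i,j}, P^{i,j})$ described immediately before the theorem satisfy every hypothesis imposed in Section \ref{sec:construction}, and then simply invoke Theorem \ref{thm:twisted} for the twisted complex and the homotopy theorem for $\mathscr{P}^{\bs}$ from Section \ref{sec:bgg} for the BGG complex. There is no new estimate to prove: the content is a bookkeeping of Sobolev orders together with quotations of the relevant structural facts from \cite{vcap2022bgg} and the boundedness statement from \cite{costabel2010bogovskiui}.

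First I would fix the representation $\mathbb V=\oplus_{j=0}^N\mathbb V_j$, a real number $q$, and set $Z^{i,j}:=H^{q-i-j}(\Omega,\Lambda^i\mathbb R^{n*}\otimes\mathbb V_j)$. Since the exterior derivative lowers Sobolev regularity by one and $Z^{i+1,j}$ has regularity $q-(i+1)-j=(q-i-j)-1$, the operator $d^{i,j}:Z^{i,j}\to Z^{i+1,j}$ is bounded, and $d^{i+1,j}\circ d^{i,j}=0$ are the de Rham relations. The algebraic maps $\partial^{i,j}$ act only on coefficients, and $Z^{i+1,j-1}$ has the same regularity $q-(i+1)-(j-1)=q-i-j$ as $Z^{i,j}$, so $S^{i,j}:Z^{i,j}\to Z^{i+1,j-1}$ is bounded with closed range; the relations $dS=-Sd$ and $S^2=0$, as well as the fact that the cohomology of $(Y^{\bs},d_V^{\bs})$ is the direct sum of the cohomologies of the complexes $(Z^{\bs,j},d^{\bs,j})$, are established in \cite{vcap2022bgg}. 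This supplies all the data in \eqref{diag:multi-rows} except the row-wise Poincar\'e operators.

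Next I would produce those Poincar\'e operators. Because $\Omega$ is starlike with respect to an open ball $B$, the construction of Costabel and McIntosh \cite{costabel2010bogovskiui} (averaging the classical operator \eqref{smooth-poincare} over the base points in $B$ against a bump function supported in $B$) yields, for each $\ell$, an operator that is pseudodifferential of order $-1$ on $\mathbb R^n$, hence induces a bounded map $H^s(\Omega,\Lambda^\ell\mathbb R^{n*})\to H^{s+1}(\Omega,\Lambda^{\ell-1}\mathbb R^{n*})$ satisfying the strict null-homotopy relation \eqref{DPPD-general}. Tensoring with the identity on $\mathbb V_j$ and taking $s=q-i-j$ gives bounded operators $P^{i,j}:Z^{i,j}\to Z^{i-1,j}$ — the target regularity is $q-(i-1)-j=(q-i-j)+1$, matching the gain — which satisfy \eqref{DPPD-input}. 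It is here that starlikeness with respect to a \emph{ball} (rather than to a single point, or a general Lipschitz domain) is used: it is exactly what provides bounded $P^{i,j}$ obeying the \emph{strict} relation \eqref{DPPD-input}, hence triviality of the row cohomologies; the partition-of-unity gluing needed for general bounded Lipschitz domains, which only yields the weakened relation with smoothing correction terms, is deferred to Section \ref{sec:modifications}.

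With all hypotheses of Section \ref{sec:construction} in force, the conclusion is immediate. For the twisted complex, $F^i=(I-P^{i+1}S^i)^{-1}=\sum_{l}(P^{i+1}S^i)^l$ is a finite sum of bounded operators by nilpotency of $PS$, so $P_V^i$ of \eqref{def:PV} is bounded and satisfies \eqref{poincare-twisted} by Theorem \ref{thm:twisted}. For the BGG complex, the closed-range property of $S$ makes $T$ bounded, hence $G^i=-\sum_k(T^id^{i-1})^kT^i$ a finite sum of bounded operators, and therefore $A^i$, $B^i$ bounded cochain maps with $B A=I$; consequently $\mathscr{P}^i=B^{i-1}P_V^iA^i$ of \eqref{def:P} is bounded and satisfies the homotopy identity. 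The only point genuinely requiring attention — and the closest thing to an obstacle — is confirming that the operators of \cite{costabel2010bogovskiui} on a domain starlike to a ball obey the strict relation \eqref{DPPD-input} rather than merely the version modulo smoothing operators, so that the abstract machinery of Section \ref{sec:construction} applies verbatim; everything else reduces to the index bookkeeping above.
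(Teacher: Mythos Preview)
Your proposal is correct and follows the same approach as the paper: the theorem is stated as an immediate consequence of the discussion preceding it, which sets up the spaces $Z^{i,j}$, verifies that $d^{i,j}$, $S^{i,j}$ satisfy the hypotheses of Section~\ref{sec:construction} via \cite{vcap2022bgg}, and supplies the bounded row-wise Poincar\'e operators $P^{i,j}$ by tensoring the Costabel--McIntosh operators with $\mathrm{id}_{\mathbb V_j}$, after which Theorem~\ref{thm:twisted} and the theorem of Section~\ref{sec:bgg} apply directly. Your version is somewhat more explicit about the Sobolev-index bookkeeping and the role of starlikeness with respect to a ball in securing the strict relation \eqref{DPPD-input}, but there is no substantive difference in strategy.
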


For later use, we observe that the Poincar\'e operators for the twisted complex have additional analytic properties. Indeed, since the Poincar\'e operators for the de Rham complex we start with are restrictions of pseudodifferential operators of order $-1$ on $\Bbb R^n$, the same is true for the operators $PS$ and hence its powers are such restrictions, too. Consequently, also the Poincar\'e operators for the twisted complex obtained in Theorem \ref{thm:bounded-poincare} are restrictions of pseudodifferential operators of order $-1$. They also have the same support properties as the Poincar\'e operators for the de Rham complex constructed in \cite{costabel2010bogovskiui}. 

\subsection{Polynomial-preservation}\label{sec:Polynomial-preservation}
A remarkable property of the constructions in \cite{costabel2010bogovskiui} is that the operators preserve subspaces of differential forms with polynomial coefficients under rather weak and general assumptions. We can carry this over to the BGG complexes constructed in \cite{vcap2022bgg} under appropriate assumptions. Following \cite{costabel2010bogovskiui}, we take a kind of axiomatic approach to this, i.e., we consider subspaces {$\Pi^{i,j}\subset C^\infty(\Bbb R^n,\Lambda^i\Bbb R^{n*}\otimes\Bbb V_j)$ of polynomial maps} and impose conditions on these subspaces that are needed in order to make sure that they are preserved by the Poincar\'e operator. The first two conditions are the exact analog of the conditions in  \cite{costabel2010bogovskiui}:
\begin{itemize}
\item[(a)] For any $t\in\Bbb R$, $a\in\Bbb R^n$ and $u\in\Pi^{i,j}$, also $x\mapsto u(tx+a)$ lies in $\Pi^{i,j}$.
  \item[(b)] For any $u\in\Pi^{i,j}$ and the Euler vector field $E(x)=x$, we get $i_Eu\in \Pi^{i-1,j}$.   
\end{itemize}
The following condition, which would make sense in the setting of \cite{costabel2010bogovskiui} is not explicitly stated or used there. It is very natural, however, for constructing subcomplexes of polynomial forms and it is needed to pass to BGG complexes. 
\begin{itemize}
\item[(c)] For any {$i, j$}, $d(\Pi^{i,j})\subset \Pi^{i+1,j}$.  
\end{itemize}
Finally, we need two conditions relating the subspace to the operators that occur in the BGG construction, namely: 
\begin{itemize}
\item[(d)] For any $i<n$ and $j>0$ we have $S(\Pi^{i,j})\subset \Pi^{i+1,j-1}$.
\item[(e)] For any $i>0$ and $j<N$ we have $T(\Pi^{i,j})\subset \Pi^{i-1,j+1}$.   
\end{itemize}
Now on any bounded domain $\Omega\subset\Bbb R^n$, the restriction of any form with polynomial coefficients lie in any Sobolev space. Consequently,  the spaces $\Pi^{i,j}$ are always contained in the spaces $Z^{i,j}$ used above  and we get
\begin{corollary}\label{cor:polynomial}
 In the setting of Theorem \ref{thm:bounded-poincare} above, suppose that we have selected spaces $\Pi^{i,j}$ of polynomial $i$-forms with coefficients in $\Bbb V_j$ such that the assumptions (a)--(e) from above are all satisfied and put $\Pi^i:=\oplus_j \Pi^{i,j}$. Then the Poincar\'e operators for the twisted complex constructed in Theorem \ref{thm:bounded-poincare} map each $\Pi^i$ to $\Pi^{i-1}$, while the Poincar\'e operators for the BGG complex map each $\Upsilon^i\cap \Pi^i$ to $\Upsilon^{i-1}\cap \Pi^{i-1}$. 
\end{corollary}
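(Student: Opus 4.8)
The plan is to track polynomial-preservation through the two-step construction summarized in diagram \eqref{diagram-two-steps}, checking at each stage that the operators assembled from $P^{i,j}$, $S^{i,j}$, $T^{i,j}$ and the orthogonal projections respect the subspaces $\Pi^{\bullet}$. The starting point is that, by Theorem 3.2 of \cite{costabel2010bogovskiui}, assumptions (a) and (b) guarantee that each de~Rham Poincar\'e operator $R_i$ maps a subspace of polynomial forms satisfying (a)--(b) to polynomial forms; tensorizing with $\mathrm{id}_{\mathbb V_j}$, the operator $P^{i,j}$ maps $\Pi^{i,j}$ to $\Pi^{i-1,j}$. Hence the vectorized $P^i$ maps $\Pi^i:=\oplus_j\Pi^{i,j}$ to $\Pi^{i-1}$. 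Since by (d) $S^i$ maps $\Pi^i$ to $\Pi^{i+1}$, the composition $P^{i+1}S^i$ maps $\Pi^i$ to itself, and therefore so does every power $(P^{i+1}S^i)^l$. Because the sum defining $F^i=(I-P^{i+1}S^i)^{-1}$ is \emph{finite} (as $PS$ is nilpotent), $F^i$ and $(F^i)^{-1}=I-P^{i+1}S^i$ both preserve $\Pi^i$. Plugging into \eqref{def:PV}, the twisted Poincar\'e operator $P_V^i=F^{i-1}P^i(F^i)^{-1}$ maps $\Pi^i$ to $\Pi^{i-1}$, which proves the first assertion of the corollary.

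For the second step I would show that the cochain maps $A^i:\Upsilon^i\to Y^i$ and $B^i:Y^i\to\Upsilon^i$ restrict well to the polynomial subspaces. First, condition (c) ensures $d^i$ maps $\Pi^i$ to $\Pi^{i+1}$, and combined with (d) it gives $d_V^i=d^i-S^i$ mapping $\Pi^i$ into $\Pi^{i+1}$. Next, condition (e) says $T^i$ maps $\Pi^i$ to $\Pi^{i-1}$ (here one must check that the components $T^{i,j}$ land in the right spots, which is exactly what (e) encodes), so each term $(T^id^{i-1})^kT^i$ in $G^i=-\sum_k(T^id^{i-1})^kT^i$ preserves polynomiality; since this sum is again finite by nilpotency of $Td$, $G^i$ maps $\Pi^i$ to $\Pi^{i-1}$. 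Consequently $A^i=I-G^{i+1}d_V^i$ maps $\Pi^i$ to $\Pi^i$. The genuinely delicate point is the behavior of the orthogonal projections $\Pi_{\ker(S^i)}$, $\Pi_{\ran(S^{i-1})^{\perp}}$, and $\Pi_{\Upsilon^i}$ appearing in $B^i=\Pi_{\Upsilon^i}(I-d_V^{i-1}G^i)$: an orthogonal projection need not preserve an arbitrary polynomial subspace. Here I would use that $S^{i,j}$ is, by its representation-theoretic origin, an \emph{$O(n)$-equivariant, constant-coefficient} algebraic operator acting pointwise on the finite-dimensional fibers $\Lambda^i\mathbb R^{n*}\otimes\mathbb V_j$; hence $\ker(S^i)$, $\ran(S^{i-1})$, and the orthogonal complement $\Upsilon^i$ are defined fiberwise by $O(n)$-invariant algebraic conditions, so that the projections act pointwise on fibers and commute with multiplication by any scalar polynomial. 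Therefore, provided the ambient polynomial space $\Pi^i$ is chosen to be a tensor product of a space of scalar polynomials with the full fiber (or more generally is fiberwise closed under these fixed algebraic projections) — which is the natural and, in the examples below, actual choice — the projections map $\Pi^i$ into $\Pi^i\cap\Upsilon^i$. Granting this, $B^i$ maps $\Pi^i$ into $\Upsilon^i\cap\Pi^i$.

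Finally, combining the three ingredients: by \eqref{def:P}, $\mathscr{P}^i=B^{i-1}P_V^iA^i$, and on $\Upsilon^i\cap\Pi^i$ we have $A^i:\Upsilon^i\cap\Pi^i\to\Pi^i$, then $P_V^i:\Pi^i\to\Pi^{i-1}$, then $B^{i-1}:\Pi^{i-1}\to\Upsilon^{i-1}\cap\Pi^{i-1}$, so $\mathscr{P}^i$ maps $\Upsilon^i\cap\Pi^i$ to $\Upsilon^{i-1}\cap\Pi^{i-1}$, as claimed. I expect the main obstacle to be precisely the projection step in the middle paragraph: unlike the other operators, the $\Pi_{\bullet}$'s are defined by an $L^2$-orthogonality condition rather than by an explicit polynomial formula, and one has to argue (via the pointwise, $O(n)$-equivariant nature of the $S$ operators) that they nonetheless act algebraically on fibers and hence commute with the polynomial structure. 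Everything else — the finiteness of the geometric series, the interaction of (a)--(e) with the explicit formulas for $F$, $G$, $A$, $B$ — is a routine diagram chase once that point is secured.
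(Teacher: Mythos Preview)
Your argument for the twisted complex is exactly the paper's, and your treatment of $G^i$ and $A^i$ for the BGG step is also identical. The one genuine difference is the projection $\Pi_{\Upsilon^i}$. You correctly flag it as the delicate point, but your resolution imposes an extra hypothesis (that $\Pi^i$ be a tensor product with the full fiber, or at least fiberwise closed under the algebraic projections). That hypothesis is \emph{not} among (a)--(e), so as written your argument proves a weaker statement than the corollary.

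The paper sidesteps the issue entirely by noting the explicit formula
\[
\Pi_{\Upsilon^i}=I-S^{i-1}T^i-T^{i+1}S^i.
\]
This follows immediately from the orthogonal decomposition $Y^i=\ran(S^{i-1})\oplus\Upsilon^i\oplus\ker(S^i)^{\perp}$ together with the identities $S^{i-1}T^i=\Pi_{\ran(S^{i-1})}$ and $T^{i+1}S^i=\Pi_{\ker(S^i)^{\perp}}$, both of which are direct from the definition of $T$. Once you have this formula, conditions (d) and (e) alone give $\Pi_{\Upsilon^i}(\Pi^i)\subset\Pi^i$, with no appeal to the tensor-product structure of $\Pi^{i,j}$ or to the $O(n)$-equivariance of $S$. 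Your representation-theoretic workaround is valid for the standard examples $\Pi^{i,j}=\mathcal H_{m-i-j}\otimes\Lambda^i\mathbb R^{n*}\otimes\mathbb V_j$, but the algebraic formula is both simpler and establishes the corollary in the generality actually claimed.
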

\begin{proof}
  By  \cite{costabel2010bogovskiui}, assumptions (a) and (b) imply that the Poincar\'e operator $P$ for the de Rham complex have the property that $P^{i,j}(\Pi^{i,j})\subset\Pi^{i-1,J}$ and together with (c), we conclude that $(I-P^{i+1}S^i)(\Pi^i)\subset\Pi^i$. Similarly, all powers of $P^iS^{i-1}$ map $\Pi^{i-1}$ to itself, so the claim on the twisted complex follows readily from Theorem \ref{thm:twisted}.
  
  To pass to the BGG complex, we just have to observe that by definition, we get $G^i(\Pi^i)\subset\Pi^{i-1}$ and $A^i(\Pi^i)\subset\Pi^i$. For the map $B^i$, we just have to observe that the projection onto $\Upsilon^i$ can be written as $I-S^{i-1}T^i-T^{i+1}S^i$, so this also maps $\Pi^i$ to itself. Hence also the claim on the BGG complex follows directly from the construction. 
\end{proof}

Observe that the conditions (a)--(e) are not difficult to satisfy. For example, we can fix some homogeneity $m>n+N$, denote by $\mathcal H^r$ the space of homogeneous polynomials of degree $r\in\Bbb N$ and then put $\Pi^{i,j}:=\mathcal H^{m-i-j}\otimes\Lambda^i\Bbb R^{n*}\otimes\Bbb V_j$. The conclusion also holds for {$ \Pi^{i,j}:=\mathcal P_{m-i-j}\otimes\Lambda^i\Bbb R^{n*}\otimes\Bbb V_j$, where
  $\mathcal P_r=(\oplus_{\ell\leq r}\mathcal H^r)$ is the space of polynomials of degree at most $r$.}

\subsection{Complex property}\label{sec:complex-prop}
Another favorable property of a Poincar\'e operator is the complex property, i.e.\ that $P^{i-1}\circ P^i=0$ for any $i$. This property is obviously preserved under the passage to the twisted complex discussed in Section \ref{sec:twisted}. The situation is not so simple for the passage to the BGG complex. From the definition, it follows directly that
$$
\mathscr{P}^{i-1}\mathscr{P}^{i}=B^{i-2}P^{i-1}_{V} A^{i-1}B^{i-1}P^{i}_{V} A^{i},
$$
and $A^{i-1}B^{i-1}$ is far from being the identity. Fortunately, there is a universal solution to this problem, which does not destroy boundedness or the property of being a pseudodifferential operator.

   \begin{theorem}\label{thm:complex}
     Let $(V^{\bs}, D^{\bs})$ be a complex and $P^{\bs}$ be a Poincar\'e operator satisfying the null homotopy relation $D^{i-1}P^{i}+P^{i+1}D^{i}=I$. Define $\tilde{  P^i}:=  P^i-   {D^{i-2}} P^{i-1}P^i$. Then $\tilde P^\bs$ satisfies the null homotopy relation  $D^{i-1}\tilde{P}^i+\tilde{P}^{i+1} D^i=I$ and the complex property $\tilde P^{i-1}\circ \tilde P^i=0$.

     If the spaces $V^i$ are Banach spaces and the operators $D^i$ and $P^i$ all are bounded, then also the modified operators $\tilde P^i$ are bounded. 
   \end{theorem}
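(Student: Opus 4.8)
The plan is to reduce the whole statement to two elementary consequences of the hypothesis $D^{i-1}P^i+P^{i+1}D^i=I$. The key observation is that the modified operator has a more symmetric description: using the homotopy relation in degree $i-1$ to substitute $D^{i-2}P^{i-1}=I-P^iD^{i-1}$ gives
\[
\tilde P^i = P^i-D^{i-2}P^{i-1}P^i = P^i-(I-P^iD^{i-1})P^i = P^iD^{i-1}P^i .
\]
So the modified operator is simply $P^iD^{i-1}P^i$ (the standard device for upgrading a homotopy so that the side conditions hold), and I would carry out every subsequent manipulation with this expression rather than with the original difference.

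Next I would record two auxiliary identities, obtained by right-multiplying the homotopy relation by a differential and using $D\circ D=0$: multiplying $D^{i-1}P^i+P^{i+1}D^i=I$ on the right by $D^{i-1}$ annihilates the second summand and yields $D^{i-1}P^iD^{i-1}=D^{i-1}$, and the degree $i+1$ relation treated the same way yields $D^iP^{i+1}D^i=D^i$. The null-homotopy relation for $\tilde P^{\bs}$ is then immediate: $D^{i-1}\tilde P^i=(D^{i-1}P^iD^{i-1})P^i=D^{i-1}P^i$ and $\tilde P^{i+1}D^i=P^{i+1}(D^iP^{i+1}D^i)=P^{i+1}D^i$, so their sum equals $D^{i-1}P^i+P^{i+1}D^i=I$.

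For the complex property I would substitute $D^{i-2}P^{i-1}=I-P^iD^{i-1}$ once more, this time inside the product $\tilde P^{i-1}\tilde P^i$:
\[
\tilde P^{i-1}\tilde P^i = \bigl(P^{i-1}D^{i-2}P^{i-1}\bigr)\bigl(P^iD^{i-1}P^i\bigr) = P^{i-1}(I-P^iD^{i-1})P^iD^{i-1}P^i .
\]
Expanding the bracket, the two resulting summands are $P^{i-1}P^iD^{i-1}P^i$ and $P^{i-1}P^i\bigl(D^{i-1}P^iD^{i-1}\bigr)P^i$, which coincide by the identity $D^{i-1}P^iD^{i-1}=D^{i-1}$ recorded above, so the difference vanishes. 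Boundedness of the $\tilde P^i$ is then immediate from the defining formula $\tilde P^i=P^i-D^{i-2}P^{i-1}P^i$, since it expresses $\tilde P^i$ as a finite composition and difference of the bounded operators $D^{\bs}$ and $P^{\bs}$.

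I do not anticipate a genuine obstacle here; the computation is very short once the form $\tilde P^i=P^iD^{i-1}P^i$ is in hand, and that form is the only thing one has to spot. The sole point needing a little attention is index bookkeeping at the two ends of the complex, where some of $D^{i-2}$, $P^{i-1}$, $D^i$, $P^{i+1}$ are zero: there one checks that $\tilde P^i$ degenerates to $P^i$ and that the instances of the homotopy and auxiliary relations invoked above are precisely the ones still available.
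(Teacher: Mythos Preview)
Your proof is correct and relies on the same underlying identities as the paper's argument. The paper proceeds by expanding $D\tilde P+\tilde PD$ directly from the definition $\tilde P=P-DPP$, obtaining the extra term $DPPD$ and showing it vanishes; for $\tilde P^{2}$ it expands into four terms and reduces to $DP^{2}DP^{2}$, which vanishes by the same identity. Your observation $\tilde P^{i}=P^{i}D^{i-1}P^{i}$ (which the paper records only as a remark after the proof) lets you isolate the single auxiliary relation $DPD=D$ and apply it uniformly, yielding a slightly more streamlined computation; conceptually the two arguments are the same.
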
 
   \begin{proof}
     The last statement is obvious from the definitions. For the relations, we compute using $D^2=0$ 
{ 
$$
  D\tilde P+\tilde PD = DP+PD-DPPD.
$$}
Using the null homotopy relation twice, we get
{ 
$$
 DPPD=-PDPD+PD=PDDP=0,
 $$}
 so $\tilde P^\bs$ satisfies the null homotopy relation, too. Finally, 
$$
 \tilde P^2= P^{2}- D P^{3}- P D P^{2}+ D P^{2}D P^{2}= P^{2}-( D P+ P D) P^{2}+
 D P^{2} D P^{2}= D P^{2} D P^{2}, 
$$
and this vanishes since $ D P^{2} D=0$.
\end{proof}
{ 
\begin{remark}
By the null-homotopy relation, we observe that $\tilde{  P^i}:=  P^i-{D^{i-2}} P^{i-1}P^i={P^{i}} D^{i-1}P^i$.
\end{remark}
}


\section{The case of nontrivial cohomology}\label{sec:modifications}
We next discuss modifications that are needed to treat cases with non-trivial cohomology, so the null homotopy relation \eqref{DPPD-general} gets replaced by the more general version \eqref{DPPD-L}. While the basic ideas are not so different from the case of vanishing cohomology, there is the question of which properties one should assume about $L$ and how to prove that they are preserved by the construction. This is particularly relevant for the examples related to de Rham complexes where the form of these properties is rather unexpected. Consequently, we will discuss two different situations here. One basic case that was mentioned in Section \ref{sec:de Rham} already, is that the image of $L$ coincides with the cohomology, or equivalently that $dL=Ld=0$. We can deal with this case in a general abstract setting. The second case is specifically tailored to the examples coming from de Rham complexes and representation theory as discussed in Section \ref{sec:repr-bgg}. Here the treatment is based on a variant of the construction for the de Rham complex in \cite{costabel2010bogovskiui}.

\subsection{$L$ operators representing cohomology}
We discuss the first case, where we require that $dL=0$ and hence $\ran(L)$ coincides with the cohomology. Our goal is to construct the corresponding operators $P_{V}$, $L_{V}$ for the twisted complex and $\mathscr{P}$, $\mathscr{L}$ for the BGG complex, such that analogous properties hold. To this end, we will follow the derivation of the twisted and BGG complexes, and carry over properties of the input operators $P$, $L$ to these cases by complex maps $F$ and $A$, $B$. The main difficulty is that in the setup in Section \ref{sec:construction}, the complex maps $F^{\bs}$ from the input to the twisted complex involves Poincar\'e operators $P^{\bs}$ of the input complexes and the null-homotopy identity $dP+Pd=I$ plays an important role in verifying the properties. {Hence in the case of nontrivial cohomology, a different argument is needed.

One option is to 
}
return to the original setting of \cite{vcap2022bgg} and make different assumptions for the two steps. For the first step, we also need the $K$-operators from the first step there. So in addition to the assumptions in the beginning of Section  \ref{sec:construction}, we assume that we have given bounded operators $K^{i,j}:Z^{i,j}\to Z^{i,j-1}$ such that $S^{i,j}=d^{i,j-1}K^{i,j}-K^{i+1,j}d^{i,j}$ and such that $S^{i,j-1}K^{i,j}=K^{i+1,j-1}S^{i,j}$. Then it was shown in Theorem 1 of \cite{vcap2022bgg} that viewing our operators as $K^i:Y^i\to Y^i$, the exponentials $F^i=\exp(K^i)$ (which are just polynomials, since each $K^i$ is nilpotent) are bounded operators that fit together to define an isomorphism $F^\bs:(Y^\bs,d^\bs)\to (Y^\bs,d_V^\bs)$ of differential complexes. So in particular, we have $F^{i+1}\circ d^i=d^i_V\circ F^i$. Using this, the passage to the twisted complex is fairly trivial. Given a Poincar\'e operator $P^\bs$ for $(Y^\bs,d^\bs)$ we define $\tilde P^i_V:Y^i\to Y^{i-1}$ by $\tilde P^i_V= F^i\circ P^i\circ (F^{i-1})^{-1}$.

\begin{proposition}\label{prop:sharp-cohom-twisted}
Suppose that $P^\bs$ is a Poincar\'e operator for $(Y^\bs,d^\bs)$ such that $dP+Pd=I-L$. Then $\tilde P^\bs$ is a Poincar\'e operator for $(Y^\bs,d_V^\bs)$ with $d_V\tilde P_V+\tilde P_Vd_V=I-L_V$, where $L^i_V=F^iL^i(F^i)^{-1}$. If each $P^i$ is bounded then each $\tilde P_V^i$ is bounded and if $P^\bs$ has the complex property then $\tilde P^\bs$ has the complex property, too. Finally, if $L^i$ has finite rank then the same is true for $L^i_V$ and if $dL=Ld=0$, then also $d_VL_V=L_Vd_V=0$.  
\end{proposition}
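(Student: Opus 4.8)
The proof of Proposition \ref{prop:sharp-cohom-twisted} reduces everything to the single structural fact that $F^\bs$ is an isomorphism of complexes, i.e.\ $F^{i+1}d^i = d_V^i F^i$, equivalently $d_V^i = F^{i+1}d^i(F^i)^{-1}$. The plan is to start from the null homotopy relation $d^{i-1}P^i + P^{i+1}d^i = I - L^i$ for the untwisted complex, conjugate both sides by the $F$-operators, and read off all the claimed properties. The one point requiring a little care is the exact placement of the $F$'s and their inverses, since $\tilde P_V^i = F^i P^i (F^{i-1})^{-1}$ lowers degree by one.

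\textbf{Main computation.} Conjugating the null homotopy relation, I would write, for $i>0$,
\begin{align*}
d_V^{i-1}\tilde P_V^i + \tilde P_V^{i+1} d_V^i
&= \bigl(F^i d^{i-1}(F^{i-1})^{-1}\bigr)\bigl(F^i P^i (F^{i-1})^{-1}\bigr)
 + \bigl(F^{i+1} P^{i+1}(F^i)^{-1}\bigr)\bigl(F^{i+1} d^i (F^i)^{-1}\bigr)\\
&= F^i\bigl(d^{i-1}P^i + P^{i+1}d^i\bigr)(F^i)^{-1}
 = F^i(I - L^i)(F^i)^{-1} = I - L_V^i,
\end{align*}
where the middle equality uses $F^i(F^i)^{-1}=I$ to cancel the inner factors, and $L_V^i := F^i L^i (F^i)^{-1}$. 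The degree-zero condition $d_V^0 \tilde P_V^1 d_V^0 = d_V^0$ follows in the same way by conjugating $d^0 P^1 d^0 = d^0$, noting that the $F$'s on the two ends match up with those hidden inside the twisted differentials. This already establishes the homotopy identity for $\tilde P_V^\bs$ with the stated $L_V^\bs$.

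\textbf{Remaining properties.} Boundedness of each $\tilde P_V^i$ is immediate since $F^i$ and $(F^i)^{-1}$ are bounded (each $F^i=\exp(K^i)$ is a polynomial in the nilpotent bounded operator $K^i$) and composition of bounded operators is bounded; likewise each $L_V^i$ is bounded. The complex property is preserved because $\tilde P_V^{i-1}\tilde P_V^i = F^{i-1}P^{i-1}(F^{i-2})^{-1}F^{i-1}P^i(F^i)^{-1} = F^{i-1}P^{i-1}P^i(F^i)^{-1}$, which vanishes if $P^{i-1}P^i=0$. If $L^i$ has finite rank then $L_V^i = F^i L^i (F^i)^{-1}$ has the same rank, since pre- and post-composition with isomorphisms does not change the rank. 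Finally, if $d L = L d = 0$, then
\begin{equation*}
d_V^i L_V^i = F^{i+1}d^i(F^i)^{-1}F^iL^i(F^i)^{-1} = F^{i+1}(d^iL^i)(F^i)^{-1} = 0,
\end{equation*}
and symmetrically $L_V^i d_V^{i-1} = F^i(L^i d^{i-1})(F^{i-1})^{-1} = 0$; since $F^\bs$ is an isomorphism of complexes, the image of $L_V^\bs$ then coincides with the cohomology of $(Y^\bs, d_V^\bs)$.

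\textbf{Expected obstacle.} There is no deep obstacle here — this is the ``easy'' half of the nontrivial-cohomology discussion precisely because one works with the exponential $F^\bs=\exp(K^\bs)$ from \cite{vcap2022bgg}, which is a genuine isomorphism of complexes, rather than with the operators $I-PS$ used in Section \ref{sec:twisted} (whose intertwining property relies on the null homotopy identity $dP+Pd=I$ and therefore breaks down when $L\neq 0$). The only place to be careful is bookkeeping of indices on the $F$-operators so that the inner factors genuinely cancel; once that is set up correctly, every assertion is a one-line conjugation argument. The substantive work — dealing with the smoothing $L$-operators coming from the partition-of-unity gluing in \cite{costabel2010bogovskiui} — is deferred to the second case and does not enter this proposition.
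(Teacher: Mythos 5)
Your proposal is correct and follows exactly the same route as the paper: since $F^\bs$ is an isomorphism of complexes, all the claimed properties of $\tilde P^\bs_V$ and $L^\bs_V$ carry over by conjugation, which is precisely the (very short) argument given in the paper.

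One bookkeeping point worth flagging: you have copied the index placement $\tilde P^i_V = F^i P^i (F^{i-1})^{-1}$ from the preamble of the statement, but with that placement the composition does not type-check (you would be feeding a degree-$i$ element into $(F^{i-1})^{-1}$, which lives on $Y^{i-1}$), and the ``inner factors'' in your display are $(F^{i-1})^{-1}F^i$ and $(F^i)^{-1}F^{i+1}$, which do not cancel. The correct formula — consistent with \eqref{def:PV} in Section 3 and with $L^i_V = F^iL^i(F^i)^{-1}$ in the proposition itself — is $\tilde P^i_V = F^{i-1}\circ P^i\circ (F^i)^{-1}$. With that placement every inner pair in your computation is literally $(F^{i-1})^{-1}F^{i-1}$ or $(F^{i+1})^{-1}F^{i+1}$, and the displayed equality $d_V^{i-1}\tilde P_V^i + \tilde P_V^{i+1} d_V^i = F^i(d^{i-1}P^i+P^{i+1}d^i)(F^i)^{-1}$ holds on the nose; likewise $\tilde P^{i-1}_V\tilde P^i_V = F^{i-2}P^{i-1}P^i(F^i)^{-1}$. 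The remainder of your argument (boundedness, finite rank, $d_VL_V = L_Vd_V = 0$, degree-zero condition) is fine and matches the paper's ``carries over by conjugation'' dismissal.
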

\begin{proof}
Since $Fd=dF$, we immediately get $d_V\tilde P_V=FdPF^{-1}$ and $\tilde P_Vd_V=FPdF^{-1}$ which immediately implies that homotopy relation with the right form of $L^i_V$. The remaining statements then are obvious, they just carry over through conjugation by $F$. 
\end{proof}

For the passage to the BGG complex, we don't need the $K$-operators any more. So we are back to the setting from the beginning of Section \ref{sec:construction} and in addition, we assume that we have given a Poincar\'e operator $P_V^\bs$ for the twisted complex $(Y^\bs,d_V^\bs)$. As in Section \ref{sec:bgg} we define the operators $\mathscr{P}^{i}:\Upsilon^i\to\Upsilon^{i-1}$ as $\mathscr{P}^{i}:=B^{i-1}P^{i}_{V} A^{i}$.
\begin{theorem}
Suppose that the Poincar\'e operator $P_V^\bs$ satisfies a homotopy relation $d_VP_V+P_Vd_V=I-L_V$ such that $d_VL_V=L_Vd_V=0$. Then the induced Poincar\'e operator $\mathscr{P}^\bs$ on the BGG complex satisfies a homotopy relation of the form $\mathscr{D}\mathscr{P}+\mathscr{P}\mathscr{D}=I-\mathscr{L}$, with $\mathscr{L}\mathscr{D}=\mathscr{D}\mathscr{L}=0$. Moreover, if $L_V^2=L_V$ {then} also $\mathscr{L}^2=\mathscr{L}$. Finally, if $P_V$ has the complex property then $L_V^2=L_V$ and we can modify $\mathscr{P}^\bs$ in such a way that in addition to all proceeding properties, it has the complex property, too. 
\end{theorem}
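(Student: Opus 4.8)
The plan is to transport all the structure from the twisted complex to the BGG complex through the cochain maps $A^\bs,B^\bs$ of Section~\ref{sec:bgg}, exactly as in the vanishing–cohomology case, and only at the very end to replace $\mathscr{P}^\bs$ by a modified operator in order to get the complex property. So I would keep $\mathscr{P}^{i}:=B^{i-1}P_{V}^{i}A^{i}$ and set $\mathscr{L}^{i}:=B^{i}L_{V}^{i}A^{i}$. Since $\mathscr{D}B=Bd_{V}$, $A\mathscr{D}=d_{V}A$ and $BA=I$, the same one-line computation as in Section~\ref{sec:bgg} gives $\mathscr{D}\mathscr{P}+\mathscr{P}\mathscr{D}=B(d_{V}P_{V}+P_{V}d_{V})A=B(I-L_{V})A=I-\mathscr{L}$, with $\mathscr{P}$ and $\mathscr{L}$ bounded since they are composites of bounded operators. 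Pushing $d_{V}L_{V}=L_{V}d_{V}=0$ through the cochain maps, e.g.\ $\mathscr{D}^{i}\mathscr{L}^{i}=\mathscr{D}^{i}B^{i}L_{V}^{i}A^{i}=B^{i+1}(d_{V}^{i}L_{V}^{i})A^{i}=0$, yields $\mathscr{D}\mathscr{L}=\mathscr{L}\mathscr{D}=0$. For $\mathscr{L}^{2}=\mathscr{L}$ the only new ingredient is that $AB$ is merely chain homotopic to the identity, say $A^{i}B^{i}=I+d_{V}^{i-1}H^{i}+H^{i+1}d_{V}^{i}$; then $\mathscr{L}^{i}\mathscr{L}^{i}=B^{i}L_{V}^{i}(A^{i}B^{i})L_{V}^{i}A^{i}$, and the two terms of $A^{i}B^{i}$ containing $d_{V}$ are killed by $L_{V}^{i}d_{V}^{i-1}=0$ and $d_{V}^{i}L_{V}^{i}=0$, leaving $B^{i}(L_{V}^{i})^{2}A^{i}=\mathscr{L}^{i}$ once $L_{V}^{2}=L_{V}$ is known.

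To establish $L_{V}^{2}=L_{V}$ from $P_{V}\circ P_{V}=0$, I would compose the relation $d_{V}P_{V}+P_{V}d_{V}=I-L_{V}$ with $P_{V}$ on the left and, separately, on the right; using $P_{V}\circ P_{V}=0$ both computations collapse to expressions for $P_{V}d_{V}P_{V}$, namely $P_{V}-P_{V}L_{V}$ and $P_{V}-L_{V}P_{V}$ (up to the obvious index shift), whence $P_{V}L_{V}=L_{V}P_{V}$. Then $L_{V}^{2}=L_{V}(I-d_{V}P_{V}-P_{V}d_{V})=L_{V}-(L_{V}d_{V})P_{V}-(L_{V}P_{V})d_{V}=L_{V}-0-P_{V}(L_{V}d_{V})=L_{V}$. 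Combined with the previous paragraph this gives all the stated properties of $\mathscr{L}$.

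For the complex property I would \emph{not} use the modification $\mathscr{P}^{i}-\mathscr{D}^{i-2}\mathscr{P}^{i-1}\mathscr{P}^{i}$ of Theorem~\ref{thm:complex}: in the presence of $\mathscr{L}$ its square is not $0$ but equals $\mathscr{L}\mathscr{P}$ composed with the modified operator. Instead I would take the symmetric form $\tilde{\mathscr{P}}^{i}:=\mathscr{P}^{i}\mathscr{D}^{i-1}\mathscr{P}^{i}$ (which agrees with the formula of Theorem~\ref{thm:complex} when $\mathscr{L}=0$, cf.\ the Remark following it, but is robust). Two identities do everything. First, composing $\mathscr{D}\mathscr{P}+\mathscr{P}\mathscr{D}=I-\mathscr{L}$ with $\mathscr{D}$ and using $\mathscr{D}^{2}=0$, $\mathscr{D}\mathscr{L}=0$ gives $\mathscr{D}\mathscr{P}\mathscr{D}=\mathscr{D}$; hence $\mathscr{D}\tilde{\mathscr{P}}+\tilde{\mathscr{P}}\mathscr{D}=(\mathscr{D}\mathscr{P}\mathscr{D})\mathscr{P}+\mathscr{P}(\mathscr{D}\mathscr{P}\mathscr{D})=\mathscr{D}\mathscr{P}+\mathscr{P}\mathscr{D}=I-\mathscr{L}$, so $\tilde{\mathscr{P}}$ inherits the homotopy relation with the \emph{same} $\mathscr{L}$ and hence every property already proved, and boundedness of $\tilde{\mathscr{P}}$ is clear. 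Second, substituting $\mathscr{D}\mathscr{P}=I-\mathscr{L}-\mathscr{P}\mathscr{D}$ and $\mathscr{P}\mathscr{D}=I-\mathscr{L}-\mathscr{D}\mathscr{P}$ into $\mathscr{D}\mathscr{P}\mathscr{P}\mathscr{D}$ and using $\mathscr{D}^{2}=0$, $\mathscr{D}\mathscr{L}=\mathscr{L}\mathscr{D}=0$ and $\mathscr{L}^{2}=\mathscr{L}$ collapses it to $(I-\mathscr{L})^{2}-(\mathscr{D}\mathscr{P}+\mathscr{P}\mathscr{D})=0$. Then $\tilde{\mathscr{P}}^{i-1}\tilde{\mathscr{P}}^{i}=\mathscr{P}^{i-1}\bigl(\mathscr{D}^{i-2}\mathscr{P}^{i-1}\mathscr{P}^{i}\mathscr{D}^{i-1}\bigr)\mathscr{P}^{i}=0$, which is the complex property. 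I expect the only genuinely delicate point to be this last step --- recognizing that the recipe of Theorem~\ref{thm:complex} must be replaced by $\mathscr{P}\mathscr{D}\mathscr{P}$ and isolating the two identities $\mathscr{D}\mathscr{P}\mathscr{D}=\mathscr{D}$ and $\mathscr{D}\mathscr{P}\mathscr{P}\mathscr{D}=0$; everything else is routine bookkeeping with the cochain maps $A,B$ and the homotopy $H$ from Section~\ref{sec:construction}.
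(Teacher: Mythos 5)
Your argument is correct, and for most of the statement — the homotopy relation for $\mathscr{P}^\bs$, the vanishing of $\mathscr{D}\mathscr{L}$ and $\mathscr{L}\mathscr{D}$, the idempotence $\mathscr{L}^2=\mathscr{L}$ via the chain homotopy $A^iB^i\simeq I$, and the deduction of $L_V^2=L_V$ from $P_V^2=0$ — it coincides with the paper's proof step by step. Where you genuinely diverge is the final construction of the complex-preserving modification. The paper proceeds in two stages: it first sets $\widehat{\mathscr{P}}:=(I-\mathscr{L})\mathscr{P}(I-\mathscr{L})$, checks that this keeps the same homotopy relation and achieves $\mathscr{L}\widehat{\mathscr{P}}=\widehat{\mathscr{P}}\mathscr{L}=0$, and only then applies the recipe $\widetilde{\mathscr{P}}:=\widehat{\mathscr{P}}-\mathscr{D}\widehat{\mathscr{P}}\widehat{\mathscr{P}}$ of Theorem~\ref{thm:complex}, using $\mathscr{D}\widehat{\mathscr{P}}\mathscr{D}=\mathscr{D}$ and $\mathscr{D}\widehat{\mathscr{P}}^2\mathscr{D}=0$. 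You instead go directly to $\widetilde{\mathscr{P}}:=\mathscr{P}\mathscr{D}\mathscr{P}$ (which, as you note via the Remark after Theorem~\ref{thm:complex}, is the same formula as the paper's in the trivial-cohomology case) and verify the two key identities $\mathscr{D}\mathscr{P}\mathscr{D}=\mathscr{D}$ and $\mathscr{D}\mathscr{P}\mathscr{P}\mathscr{D}=0$ from scratch; the second follows by expanding both $\mathscr{D}\mathscr{P}$ and $\mathscr{P}\mathscr{D}$ via the homotopy relation, giving $(I-\mathscr{L})^2-(I-\mathscr{L})=0$ once one uses $\mathscr{D}^2=0$, $\mathscr{D}\mathscr{L}=\mathscr{L}\mathscr{D}=0$, and $\mathscr{L}^2=\mathscr{L}$. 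I have checked this computation and it is correct. Your route is more economical (one modification rather than two) and makes visible that only idempotence of $\mathscr{L}$ — not the stronger $\mathscr{L}\mathscr{P}=\mathscr{P}\mathscr{L}=0$ — is really required. What the paper's two-step version buys is a reduction to the already-established Theorem~\ref{thm:complex}, plus the intermediate operator $\widehat{\mathscr{P}}$ with $\mathscr{L}\widehat{\mathscr{P}}=\widehat{\mathscr{P}}\mathscr{L}=0$, which is a useful normalization in itself.
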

\begin{proof}
  As before, we see (without assumptions on $L_V$) that $\mathscr{P}^\bs$ satisfies the homotopy relation with $\mathscr{L}^i=B^iL_V^iA^i$, so the question is whether the nice properties are preserved. But then $A^i\mathscr{D}^{i-1}=d^i_VA^i$ and $\mathscr{D}^{i-1}B^{i-1}=B^id_V^{i-1}$ together with $d_VL_V=L_Vd_V=0$ implies that $\mathscr{L}\mathscr{D}=\mathscr{D}\mathscr{L}=0$. By construction, we also get $(\mathscr{L}^i)^2=B^iL^i_VA^iB^iL^i_VA^i$. Next, equation (25) of \cite{vcap2022bgg} shows that $A^iB^i=I-d^{i-1}_VG^i-G^{i+1}d^i_V$ and hence we conclude that $L^i_VA^iB^iL^i_V=(L^i_V)^2=L^i_V$ and hence $(\mathscr{L}^i)^2=\mathscr{L}^i$. 

  To prove the last statement, assume that $P^{i-1}_VP^i_V=0$. Then the homotopy relation readily implies that $L_VP_V=P_V-P_Vd_VP_V=P_VL_V$. Using this and $L_Vd_V=d_VL_V=0$, we see that composing the homotopy relation with $L_V$ gives $0=L_V-(L_V)^2$, so $(L_V)^2=L_V$ follows. Now we first define $\widehat{\mathscr{P}}^i:=\mathscr{P}^i-\mathscr{P}^i\mathscr{L}^i-\mathscr{L}^{i-1}\mathscr{P}^i+\mathscr{L}^{i-1}\mathscr{P}^i\mathscr{L}^i$. This implies that $\mathscr{D}\widehat{\mathscr{P}}=\mathscr{D}\mathscr{P}$ and $\widehat{\mathscr{P}}\mathscr{D}=\mathscr{P}\mathscr{D}$, so the homotopy relation holds for $\widehat{\mathscr{P}}$ in the same form as for $\mathscr{P}$. But then $\mathscr{L}^2=\mathscr{L}$ and the definition of $\widehat{\mathscr{P}}$ readily imply that $\mathscr{L}\widehat{\mathscr{P}}=\widehat{\mathscr{P}}\mathscr{L}=0$.

  This brings us into a situation in which the trick used in the proof of Theorem \ref{thm:complex} works. The homotopy relation for $\widehat{\mathscr{P}}$ together with $\mathscr{D}\mathscr{L}=0$ implies that $\mathscr{D}\widehat{\mathscr{P}}\mathscr{D}=\mathscr{D}$ and in turn $\mathscr{D}\widehat{\mathscr{P}}^2\mathscr{D}=0$. Then we can define $\widetilde{\mathscr{P}}^i:=\widehat{\mathscr{P}}^i-\mathscr{D}^{i-1}\widehat{\mathscr{P}}^{i-1}\widehat{\mathscr{P}}^i$ for each $i$. This definition implies that $\mathscr{D}\widetilde{\mathscr{P}}=\mathscr{D}\widehat{\mathscr{P}}$ and similar for the other composition. Thus we still have the homotopy relation with the same map $\mathscr{L}$. The proof that $\widetilde{\mathscr{P}}^2=0$ is exactly as the last computation in the proof of Theorem \ref{thm:complex}. 
\end{proof}

\begin{remark}
As we assume that there exist bounded $K$ operators, the above construction only applies to BGG complexes with smooth functions and twisted complexes with smooth and certain Sobolev functions (e.g., the rows in the input complex have equal Sobolev regularity).  This does not apply to BGG complexes with Sobolev spaces, as they will require input complexes with different kinds of regularity, for which the $K$ operators are rarely known.
\end{remark}

\subsection{Bounded Lipschitz domains}\label{sec:Lipschitz}
We now discuss the second setting for non-trivial cohomology, so we use the setup of Section \ref{sec:repr-bgg} but for the case of a bounded Lipschitz domain $\Omega$. So as there, we start with a representation $\mathbb V=\oplus_{i=0}^N\mathbb V_i$ and for a fixed $q\in\Bbb R$ define  $Z^{i,j}:=H^{q-i-j}(\Omega,\Lambda^i\mathbb R^{n*}\otimes\mathbb V_j)$. We also have the bounded operators $d^{i,j}:Z^{i,j}\to Z^{i+1,j}$ and $S^{i,j}:Z^{i,j}\to Z^{i+1,j-1}$ induced by the exterior derivative and the linear maps $\partial^{i,j}$ as there. This gives us the twisted complex $(Y^\bs,d^\bs_V)$. Following the developments in Section 4.3 of \cite{costabel2010bogovskiui}, we find a finite covering of $\overline{\Omega}$ by open sets $U_1,\dots,U_m$ such that for each $i$, the domain $\Omega_i:=U_i\cap\Omega$ is starlike with respect to some open ball. Moreover, via a partition of unity, we get smooth functions $\chi_i\in C^\infty(\Bbb R^n,\Bbb R)$ with values in $[0,1]$ such that $\supp(\chi_i)\subset U_i$ and such that $\sum_i \chi_i\equiv 1$ on some open neighborhood of $\overline{\Omega}$.

Now for each $i$, the Poincar\'e operator for the twisted complex constructed in Theorem \ref{thm:bounded-poincare} makes sense on smooth forms, so we get operators that we denote by $P^i_\ell: C^\infty(\Omega_\ell,\Lambda^i\Bbb R^{n*}\otimes\Bbb V)\to C^\infty(\Omega_\ell,\Lambda^{i-1}\Bbb R^{n*}\otimes\Bbb V)$ such that $d^{i-1}_VP^i_\ell+P^{i+1}_\ell d^i_V=I$ for each $i>0$. Hence for $u\in C^\infty(\Omega,\Lambda^i\Bbb R^{n*}\otimes\Bbb V)$ we can restrict to $\Omega_\ell$ and then consider $\chi_\ell P^i_\ell u\in C^\infty(\Omega_\ell,\Lambda^{i-1}\Bbb R^{n*}\otimes\Bbb V)$. But this vanishes identically on the open subset $\Omega\setminus(\Omega\cap \supp(\chi_i))$ which together with $\Omega_i$ covers $\Omega$, so it can be extended by zero to a form in $C^\infty(\Omega,\Lambda^{i-1}\Bbb R^{n*}\otimes\Bbb V)$. So in particular, we can define
\begin{equation}\label{tP-Lip-def}
  \tilde P^i(u):=\sum_\ell\chi_\ell P^i_\ell u\in C^\infty(\Omega,\Lambda^{i-1}\Bbb R^{n*}\otimes\Bbb V)
\end{equation}
By construction, this is agian the restriction of a pseudodifferential operator of order $-1$ and hence it defines bounded operators $\tilde P^i:Y^i\to Y^{i-1}$, where as before $Y^i=\oplus_jZ^{i,j}$. Moreover, we get 
$$
(d^i_V\tilde P^i+\tilde P^id^i_V)(u)=\textstyle\sum_\ell\chi_\ell(d^i_VP^i_\ell u+P^i_\ell d^i_Vu)+\sum_\ell(d^i_V(\chi_\ell P^i_\ell u)-\chi_\ell d^i_V(P^i_\ell u)).  
$$
The homotopy relations for the operators $P^i_\ell$ show that the first sum gives $\sum_\ell\chi_\ell u|_{\Omega_\ell}=u$ while the second sum can be written in the language of differential forms as $\sum_\ell d\chi_i\wedge P^i_\ell u$. Here we use the wedge product of a real-valued one-form and a $\Bbb V$-valued $(i-1)$-form, which is a $\Bbb V$-valued $i$-form. Now each summand in the latter sum is a form on $\Omega_\ell$ which vanishes identically on the complement of $\supp(\chi_i)$, so as above, this defines
\begin{equation}\label{tK-Lip-def}
  \tilde L^i u:=-\sum_\ell d\chi_\ell\wedge P^i_\ell u\in C^\infty(\Omega,\Lambda^i\Bbb R^{n*}\otimes\Bbb V).
\end{equation}
As above, $\tilde L^i$ is the restriction of a pseudodifferential operator of order $-1$ on $\Bbb R^n$ so again it defines a bounded operator $Y^i\to Y^i$ (which even improves Sobolev regularity). Now by definition, we  get the homotopy relation
\begin{equation}\label{tP-tK-homot}
  d^{i-1}_V\tilde P^i+\tilde P^{i+1}d_V^i=I-\tilde L^i
\end{equation}
and hitting this with $d_V$ from the left and from the right, we conclude that $d^i_V\tilde L^i=\tilde L^{i+1}d^i_V$.   

Still following \cite{costabel2010bogovskiui}, we can improve things further. This is based on the observation that the functions $\chi_\ell$ coming from a partition of unity can often be chosen to be constant on large subsets. Now suppose that $x\in\Omega$ is a point at which the family $\{\chi_\ell\}$ is \textit{flat} in the sense that each $\chi_i$ is constant on some neighborhood of $x$. Then evidently $d\chi_\ell (x)=0$ for each $\ell$ and hence $\tilde L^i(u)$ vanishes at $x$ for any $u\in  C^\infty(\Omega,\Lambda^i\Bbb R^{n*}\otimes\Bbb V)$. Now a compactness argument shows that given $\Omega$, we can find finitely many coverings $\{U^{(s)}_\ell\}$ of $\overline{\Omega}$ and associated families $\{\chi_\ell^{(s)}\}$ of functions such that each $U^{(s)}_\ell\cap\Omega$ is starlike with respect to some ball and for each $x_0\in\Bbb R^n$ at least one of the families $\{\chi_\ell^{(s)}\}$ is flat at $x_0$, see Lemma 4.5 of \cite{costabel2010bogovskiui}. Now we denote the operators for the covering $\{U^{(s)}_\ell\}$ by $\tilde P^i_{(s)}$ and $\tilde L^i_{(s)}$ for $s=1,\dots, m$ and the put
  \begin{gather}\label{P-Lip-def}
    P^i_V:=\tilde P^i_{(1)}+\tilde L^{i-1}_{(1)}\tilde P^i_{(2)}+\tilde L^{i-1}_{(1)}\tilde L^{i-1}_{(2)}\tilde P^i_{(3)}+\dots +\tilde L^{i-1}_{(1)}\dots \tilde L^{i-1}_{(m-1)}\tilde P^i_{(m)}.\\
    \label{K-Lip-def}    L^i_V:=\tilde L^i_{(1)}\dots\tilde L^i_{(m)}
  \end{gather}
  \begin{theorem}\label{thm:Lip-domains}
 (1) The operators $P^i_V:C^\infty(\Omega,\Lambda^i\Bbb R^{n*}\otimes\Bbb V)\to C^\infty(\Omega,\Lambda^{i-1}\Bbb R^{n*}\otimes\Bbb V)$ are restrictions of pseudodifferential operators of order $-1$ on $\Bbb R^n$ and  hence induce bounded operators $Y^i\to Y^{i-1}$ that we denote by the same symbol. These satisfy the relations $d^{i-1}_VP^i_V+P^{i+1}_Vd^i_V=I-L^i_V$ and we get $d^i_VL^i_V=L^{i+1}_Vd_V^i$.

(2)  The operators $L^i_V:C^\infty(\Omega,\Lambda^i\Bbb R^{n*}\otimes\Bbb V)\to C^\infty(\Omega,\Lambda^i\Bbb R^{n*}\otimes\Bbb V)$ are restrictions of pseudodifferential operators of order $-\infty$ on $\Bbb R^n$ and hence they are smoothing operators.

(3) Using the operators $P^i_V:Y^i\to Y^{i-1}$ as the starting point of the construction discussed in Section \ref{sec:bgg}, we obtain a bounded Poincare operator $\mathscr{P}^\bs$ for the BGG complex determined by $\Bbb V$ which satisfies a homotopy relation $\mathscr{D}\mathscr{P}+\mathscr{P}\mathscr{D}=I-\mathscr{L}$ where $\mathscr{L}$ is the restriction of a pseudodifferential operator of order $-\infty$ on $\Bbb R^n$ and hence a smoothing operator and invertible module compact operators. 
  \end{theorem}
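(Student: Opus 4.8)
To prove parts (1) and (2), the plan is to transplant the glueing construction of Costabel and McIntosh (Sections 4.3--4.5 of \cite{costabel2010bogovskiui}) from the de Rham complex to the twisted complex $(Y^\bs,d_V^\bs)$. This is legitimate because, as recorded in Section~\ref{sec:repr-bgg}, the local twisted Poincar\'e operators $P^i_\ell$ coming from Theorem~\ref{thm:bounded-poincare} are restrictions of pseudodifferential operators of order $-1$ on $\Bbb R^n$, and $d_V=d-S$ differs from $d$ only by the algebraic (zeroth order) bundle map $S$. The claim in (1) that $P^i_V$ is the restriction of a pseudodifferential operator of order $-1$ is then immediate from \eqref{P-Lip-def}: it is a finite sum of compositions of the operators $\tilde P^{\bs}_{(s)}$ and $\tilde L^{\bs}_{(s)}$, each of which is such a restriction; boundedness $Y^i\to Y^{i-1}$ follows at once.

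For the homotopy relation in (1), I would substitute \eqref{P-Lip-def} into $d^{i-1}_VP^i_V+P^{i+1}_Vd^i_V$, move every factor $d_V$ to the right through the operators $\tilde L_{(s)}$ using the intertwining identity $d_V\tilde L_{(s)}=\tilde L_{(s)}d_V$ established just before the theorem, and then apply the local homotopy relation \eqref{tP-tK-homot}, $d^{i-1}_V\tilde P^i_{(s)}+\tilde P^{i+1}_{(s)}d^i_V=I-\tilde L^i_{(s)}$, in each summand. The $s$-th summand thereby becomes $\tilde L^i_{(1)}\cdots\tilde L^i_{(s-1)}(I-\tilde L^i_{(s)})$, so the whole sum telescopes to $I-\tilde L^i_{(1)}\cdots\tilde L^i_{(m)}=I-L^i_V$ by \eqref{K-Lip-def}; as for the de Rham complex, a separate, slightly modified relation is needed in degree zero. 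The identity $d^i_VL^i_V=L^{i+1}_Vd^i_V$ then drops out by pushing $d^i_V$ through the $m$ factors of $L^i_V$ with the same intertwining relation.

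Part (2) is the heart of the matter, and the step I expect to be the main obstacle. Following Lemma 4.5 of \cite{costabel2010bogovskiui}, the crucial observation is that by \eqref{tK-Lip-def} the full symbol of $\tilde L^i_{(s)}$ has the form $-\sum_\ell d\chi^{(s)}_\ell(x)\wedge p^{(s)}_\ell(x,\xi)$, and therefore vanishes for all $\xi$ at every point $x$ at which the family $\{\chi^{(s)}_\ell\}$ is flat; since flatness is an open condition, the symbol vanishes on an entire neighborhood of such a point, so there $\tilde L^i_{(s)}v$ is identically zero for every $v$. Given $x_0\in\Bbb R^n$, I would choose $s_0$ with $\{\chi^{(s_0)}_\ell\}$ flat near $x_0$; then $\tilde L^i_{(s_0)}$ maps everything into forms that are smooth (indeed zero) near $x_0$, and the remaining factors $\tilde L^i_{(1)},\dots,\tilde L^i_{(s_0-1)}$, being pseudodifferential and hence pseudolocal, preserve this smoothness. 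As $x_0$ is arbitrary, $L^i_Vu$ is globally smooth for every $u$, so $L^i_V$ is a smoothing operator; the uniform symbol estimates that upgrade this to the statement that $L^i_V$ is the restriction of a pseudodifferential operator of order $-\infty$ are verbatim as in \cite{costabel2010bogovskiui}.

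For part (3) I would feed the operators $P^i_V:Y^i\to Y^{i-1}$ into the construction of Section~\ref{sec:bgg} and set $\mathscr{P}^i:=B^{i-1}P^i_VA^i$; this is bounded since $A^i$, $P^i_V$ and $B^{i-1}$ all are. Because $A^\bs$ and $B^\bs$ are cochain maps ($\mathscr{D}^{i-1}B^{i-1}=B^id^{i-1}_V$ and $A^{i+1}\mathscr{D}^i=d^i_VA^i$) with $B^iA^i=I$, the computation from Section~\ref{sec:bgg} --- which uses nothing about the middle term --- gives $\mathscr{D}\mathscr{P}+\mathscr{P}\mathscr{D}=B(d_VP_V+P_Vd_V)A=B(I-L_V)A=I-\mathscr{L}$ with $\mathscr{L}^i:=B^iL^i_VA^i$. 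Since $A^i$ and $B^i$ are globally defined differential operators of finite order while $L^i_V$ is the restriction of a pseudodifferential operator of order $-\infty$, the composition $\mathscr{L}^i$ is again such a restriction and hence smoothing. Finally, on the bounded Lipschitz domain $\Omega$ a smoothing operator gains arbitrarily many derivatives and therefore factors through a compact Sobolev embedding (Rellich--Kondrachov), so $\mathscr{L}$ is compact and $I-\mathscr{L}$ is Fredholm of index zero, in particular invertible modulo compact operators. Apart from the flatness/pseudolocality step in part (2), the whole argument is bookkeeping on top of the de Rham results of \cite{costabel2010bogovskiui} and the abstract BGG machinery of Section~\ref{sec:bgg}.
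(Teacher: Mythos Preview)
Your proof is correct and follows essentially the same path as the paper's: the telescoping/recursive computation for the homotopy relation in (1), the flatness-plus-pseudolocality argument for the smoothing property in (2), and the cochain-map identities $\mathscr{D}B=Bd_V$, $A\mathscr{D}=d_VA$, $BA=I$ together with the fact that $A$ and $B$ are globally defined differential operators for (3). Your formula $\mathscr{L}^i=B^iL^i_VA^i$ is in fact the correct one (the paper's $B^{i-1}$ appears to be a misprint), and your additional remarks on symbols, the degree-zero caveat, and Rellich--Kondrachov compactness are harmless elaborations rather than substantive differences.
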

  \begin{proof}
    (1) The fact that $P^i_V$ is the restriction of a pseudodifferential operator follows readily from its definition as a composition of such operators. Moreover, the homotopy relations for the operators $P^i_{(s)}$ together with $d_V^{i-1}\tilde L^i_{(s)}=\tilde L^{i+1}_{(s)}d_V^i$ recursively imply that
    $$
    d^{i-1}_V\tilde L^{i-1}_{(1)}\dots \tilde L^{i-1}_{(s)}\tilde P^i_{(s+1)}+\tilde L^i_{(1)}\dots \tilde L^i_{(s)}\tilde P^{i+1}_{(s+1)}d_V^i=\tilde L^i_{(1)}\dots \tilde L^i_{(s)}-\tilde L^i_{(1)}\dots \tilde L^i_{(s+1)}
    $$
    which immediately gives the required homotopy relation, while  $d^i_VL^i_V=L^{i+1}_Vd_V^i$ is obvious from the definition.

    (2) If the family $\{\chi_\ell^{(s)}\}$ is flat on a neighborhood of $x_0$ then for any distribution $u$, the distribution $\tilde L^i_{(s)}u$ is smooth on a neighborhood of $x_0$. But since all the other operators $\tilde L^i_{(r)}$ are pseudolocal, also the composition $L^i_V$ has the property that $L^i_Vu$ is smooth on a neighborhood of $x_0$. But by construction, this works for any $x_0\in\Bbb R^n$ and the claim follows.

    (3) We get $\mathscr{P}^i=B^{i-1}P^i_VA^i$ for the operators $A$ and $B$ from section \ref{sec:bgg}. Since $\mathscr{D}B=Bd_V$ and $A\mathscr{D}=d_VA$, we get the claimed homotopy relation with $\mathscr{L}^i=B^{i-1}L^i_VA^i$. But in our setting, the operators $A$ and $B$ both are differential operators that are defined on smooth maps on all of $\Bbb R^n$ so the representation as a composition implies that $\mathscr{L}^i$ is the restriction of a pseudodifferential operator of order $-\infty$. 
  \end{proof}

\section{Proxies and Applications}\label{sec:applications}

In this section, we express the Poincar\'e operators derived in the previous sections in terms of vector/matrix proxies. First, we discuss the case of dimension one using proxies and explicit forms of spaces and operators. The result is trivial, as one can obtain the inverse of a second-order derivative by simply integrating twice, but carrying out this simple example may illustrate the general construction. Then we discuss the case of the elasticity complex in 3D and compare the result at index one to the Ces\`aro-Volterra integral and generalizations \cite{ciarlet2010cesaro}. We also present explicit results for the elasticity version of the twisted complex, inspired by their applications in continuum mechanics. 

 Finally, we present an application of the newly derived Poincar\'e operators on a systematic construction of polynomial versions of BGG complexes. The homotopy relation and the polynomial-preserving property imply their exactness.


\subsection{Poincar\'e operators in one dimension.} The geometric version of the BGG machinery usually is not discussed in dimension one, since the corresponding geometric structures are only available in higher dimensions. Still one can follow the representation theory setup discussed in Section 3 of \cite{vcap2022bgg} for the Lie algebra $\mathfrak g=\mathfrak{sl}(2,\mathbb R)$ with $\mathfrak g_0=\Bbb R$ and use the corresponding maps to setup BGG diagrams. The simplest case corresponds to the standard representation $\Bbb R^2$ of $\mathfrak g$. This leads to the following BGG diagram in one space dimension for any real number $q$.
\begin{equation} \label{diagram-1D}
\begin{tikzcd}
 H^{q} \arrow{r}{\partial} &H^{q-1} \\
 H^{q-1}\arrow{r}{\partial} \arrow[ur, "I"]&H^{q-2},
 \end{tikzcd}
\end{equation}
where $\partial:=\frac{d}{dx}$.
The BGG complex is obtained by simply connecting the two rows:
\begin{equation}\label{BGG-1D}
\begin{tikzcd}
H^{q}\arrow{r}{\partial^{2}}& H^{q-2}.
\end{tikzcd}
\end{equation}

Suppose that for the two rows of \eqref{diagram-1D} we have Poincar\'e operators $P_{\sharp}: H^{q-1}\to H^{q}$ and $P_{\flat}: H^{q-2}\to H^{q-1}$, respectively. That is, $\partial P_{\sharp}=I_{H^{q-1}}$, $P_{\sharp}\partial =I_{H^{q}}+c$, where $c$ is a constant. Similar identities hold for $P_{\flat}$. Then we want to derive Poincar\'e operators for the BGG complex \eqref{BGG-1D} satisfying $\partial^{2}\mathscr{P}=I_{H^{q-2}}$ and $\mathscr{P}\partial^{2}=I_{H^{q}}+g$, where $g\in \ker(\partial^{2})$ is a linear function.
In fact,   one can verify that composition of the de~Rham Poincar\'e operators $\mathscr{P}:=P_{\sharp}\circ P_{\flat}$ gives the desired properties. Nevertheless, we go through our construction to illustrate the theory. 

The first step is to derive the Poincar\'e operators for the twisted complex by the following diagram:
  \begin{equation} \label{twisted}
\begin{tikzcd}[ampersand replacement=\&, column sep=1in]
\left (
\begin{array}{c}
H^{q}\\
H^{q-1} 
\end{array}
\right )\arrow{d}{F^{0}} \arrow{r}{\left (
\begin{array}{cc}
\partial &\\
&\partial
\end{array}
\right ) } \&\left (
\begin{array}{c}
H^{q-1} \\
H^{q-2} 
\end{array}
\right )\arrow{d}{F^{1}}  
\\
\left (
\begin{array}{c}
H^{q}\\
H^{q-1} 
\end{array}
\right ) \arrow{r}{\left (
\begin{array}{cc}
\partial & -I\\
0&\partial
\end{array}
\right ) } \&\left (
\begin{array}{c}
H^{q-1} \\
H^{q-2} 
\end{array}
\right ),
 \end{tikzcd}
\end{equation}
where
$$F^{0}:=
\left (
\begin{array}{cc}
I & P_{\sharp}\\
0 & I
\end{array}
\right ), \quad 
F^{1}:=
\left (
\begin{array}{cc}
I & 0\\
0 & I
\end{array}
\right ).
$$
Using the commuting diagram, we define 
$$
P_{V}:=F^{0}\circ P\circ (F^{1})^{-1}=\left (
\begin{array}{cc}
I & P_{\sharp}\\
0 & I
\end{array}
\right )\left (
\begin{array}{cc}
P_{\sharp} & 0\\
0 & P_{\flat}
\end{array}
\right )\left (
\begin{array}{cc}
I & 0\\
0 & I
\end{array}
\right )=\left (
\begin{array}{cc}
P_{\sharp} & P_{\sharp}P_{\flat}\\
0 & P_{\flat}
\end{array}
\right ).
$$

The operators in \eqref{ABN2} now boil down to
$$A^{0}:=
\left (
\begin{array}{cc}
I & 0\\
\partial & 0
\end{array}
\right ), \quad 
A^{1}:=
\left (
\begin{array}{cc}
I & 0\\
0 & I
\end{array}
\right )\quad 
B^{0}:=
\left (
\begin{array}{cc}
I & 0\\
0 & 0
\end{array}
\right ), \quad 
B^{1}:=
\left (
\begin{array}{cc}
0 & 0\\
\partial & I
\end{array}
\right ),
$$
This leads to the following commuting diagram:
\begin{equation}\label{BGG-1D-iso}
\begin{tikzcd}[ampersand replacement=\&, column sep=1in]
\left (
\begin{array}{c}
H^{q}\\
H^{q-1} 
\end{array}
\right )\arrow{d}{B^{0}} \arrow{r}{\left (
\begin{array}{cc}
\partial &-I\\
0&\partial
\end{array}
\right ) } \&\left (
\begin{array}{c}
H^{q-1} \\
H^{q-2} 
\end{array}
\right )\arrow{d}{B^{1}}  
\\
\left (
\begin{array}{c}
H^{q}\\
0
\end{array}
\right ) \arrow{r}{\left (
\begin{array}{cc}
0 & 0\\
\partial^{2} & 0
\end{array}
\right ) } \&\left (
\begin{array}{c}
0 \\
H^{q-2} 
\end{array}
\right ),
 \end{tikzcd}
\end{equation}

Then we obtain Poincar\'e operators for the bottom row of \eqref{BGG-1D-iso}:
$$
 {\mathscr{P}}=B^{0}\circ P_{V} \circ A^{1}=\left (
\begin{array}{cc}
I & 0\\
0 & 0
\end{array}
\right )\left (
\begin{array}{cc}
P_{\sharp} & P_{\sharp}P_{\flat}\\
0 & P_{\flat}
\end{array}
\right )\left (
\begin{array}{cc}
I & 0\\
0 & I
\end{array}
\right )=\left (
\begin{array}{cc}
P_{\sharp} & P_{\sharp}P_{\flat}\\
0 & 0
\end{array}
\right ).
$$
Note that the first component of $\Upsilon^{1}$ is zero because the $S$ operator is bijective ($\ran(S)^{\perp}=0$). Therefore
$$
\mathscr {P}\left (
\begin{array}{c}
0 \\
u 
\end{array}
\right )=\left (
{ \begin{array}{c}
    P_{\sharp}P_{\flat}u\\
    0
\end{array}}
\right )
$$
gives the Poincar\'e operator $\mathscr{P}:=P_{\sharp}P_{\flat}$ for the nontrivial component.

\subsection{Vector and matrix proxies}
In higher dimensions, the diagrams are more delicate and more structures come in. Following \cite{arnold2021complexes,vcap2022bgg}, we summarize our notations for vector/matrix proxies in the table above.
\begin{table}[h!]
\begin{center}
\begin{tabular}{c|c}
$\mathbb V$ & $\mathbb R^n$  \\
$\mathbb M$ &the space of all $n\times n$-matrices\\
$\mathbb S$ & symmetric matrices\\
$\mathbb K$ & skew symmetric matrices\\
$\mathbb T$ & trace-free matrices\\
$\skw: \M\to \K$ & skew symmetric part of a matrix\\
$\sym: \M\to \S$ & symmetric part of a matrix\\
$\tr:\M\to\R$ & matrix trace\\
$\iota: \R\to \M$  & the map $\iota u:= uI$ identifying a scalar with a scalar matrix\\
$\dev:\mathbb{M}\to \mathbb{T}$ & deviator (trace-free part of a matrix) given by $\dev u:=u-1/n \tr (u)I$\\
\end{tabular}
\end{center}
\end{table}

Moreover, in three space dimensions, we can identify a skew symmetric matrix with a vector,
$$
 \mskw\left ( 
\begin{array}{c}
v_{1}\\ v_{2}\\ v_{3}
\end{array}
\right ):= \left ( 
\begin{array}{ccc}
0 & -v_{3} & v_{2} \\
v_{3} & 0 & -v_{1}\\
-v_{2} & v_{1} & 0
\end{array}
\right ).
$$
Consequently, we have   $\mskw(v)w = v\times w$ for $v,w\in\V$.  We also define
$\vskw=\mskw^{-1}\circ \skw: \M\to \V$. We further define the Hessian operator $\hess:=\grad\circ\grad$ and for any matrix-valued function $u$, $\mathcal{S}u:=u^{T}-\tr(u)I$. In our convention, the proxies of exterior derivatives ($\grad$, $\curl$, $\div$, etc.) act column-wise, and so do the Poincar\'e operators for the de~Rham complexes.

 Some examples in three dimensions can be demonstrated by the following diagram in vector/matrix forms:
 \begin{equation}\label{diagram-4rows}
\begin{tikzcd}
0 \arrow{r} &H^{q}\otimes \mathbb{R}  \arrow{r}{\grad} &H^{q-1}\otimes \mathbb{V} \arrow{r}{\curl} &H^{q-2}\otimes \mathbb{V} \arrow{r}{\div} & H^{q-3}\otimes \mathbb{R} \arrow{r}{} & 0\\
0 \arrow{r}&H^{q-1}\otimes \mathbb{V}\arrow{r}{\grad} \arrow[ur, "I"]&H^{q-2}\otimes \mathbb{M}  \arrow{r}{\curl} \arrow[ur, "2\vskw"]&H^{q-3}\otimes \mathbb{M} \arrow{r}{\div}\arrow[ur, "\tr"] & H^{q-4}\otimes \mathbb{V} \arrow{r}{} & 0\\
0 \arrow{r} &H^{q-2}\otimes \mathbb{V}\arrow{r}{\grad} \arrow[ur, "-\mskw"]&H^{q-3}\otimes \mathbb{M}  \arrow{r}{\curl} \arrow[ur, "\mathcal{S}"]&H^{q-4}\otimes \mathbb{M} \arrow{r}{\div}\arrow[ur, "2\vskw"] & H^{q-5}\otimes \mathbb{V} \arrow{r}{} & 0\\
0 \arrow{r} &H^{q-3}\otimes \mathbb{R}\arrow{r}{\grad} \arrow[ur, "\iota"]&H^{q-4}\otimes \mathbb{V}  \arrow{r}{\curl} \arrow[ur, "-\mskw"]&H^{q-5}\otimes \mathbb{V} \arrow{r}{\div}\arrow[ur, "I"] & H^{q-6}\otimes \mathbb{R} \arrow{r}{} & 0.
 \end{tikzcd}
\end{equation}
This diagram should not be confused with \eqref{diag:multi-rows}, as in \eqref{diagram-4rows}, we only read two rows at one time. From the first two rows, we get the Hessian complex; the middle two rows give the elasticity complex; and the last two rows give the $\div\div$ complex. See \cite{arnold2021complexes} for more details.  

\textbf{The elasticity complex}: In three space dimensions, the diagram for deriving the elasticity complex and the Poincar\'e operators is the two middle rows in \eqref{diagram-4rows}.
We first discuss the case with smooth functions. The benefit is that we get $P\circ P=0$ if we use the standard Poincar\'e operators for the de~Rham complex (which may not hold for the regularized version \cite{costabel2010bogovskiui}).

In the diagram, $S^{0}=-\mskw$ and $S^{1}=\mathcal{S}$ are injective. Therefore the second components of $Y^{0}$ and $Y^{1}$ are zero (in the kernel of $S^{0}$ and $S^{1}$, respectively). Then the operator $\mathscr{P}^{1}$ at index one essentially maps between the first components with the form $P-P(PS-SP)Td$, which further simplifies to $P+PSPTd$ if 
$P\circ P=0$.

  Let $e\in C^{\infty}\otimes \mathbb{S}$. Then $\mathscr{P}^{1}(e)\in C^{\infty}\otimes \mathbb{V}$ is given by 
\begin{align*} 
\mathscr{P}^{1}(e)&=\int_{0}^{x}e(y)\cdot\,dy-\int_{0}^{x}(\underbrace{\mskw\int_{0}^{y}\underbrace{(e({z})\times \nabla)^{T}}_\text{$Td(e)$}\cdot dz)}_\text{$SPTd(e)$}\cdot dy.
\end{align*}
Here we have used the de~Rham Poincar\'e operators with smooth forms \eqref{smooth-poincare} for the definition of $P$. We can verify one of the null homotopy identities by straightforward calculations. Indeed, 
if $e=\deff(w)$ for some vector-valued function $w$, then  
\begin{align*}
\mathscr{P}^{1}(e) &=\int_{0}^{x}e(y)\,dy-\frac{1}{2}\int_{0}^{x}\mskw (\int_{0}^{y}(\nabla\times w\nabla)\cdot\,dz)\cdot\, dy\\
&=\int_{0}^{x}e(y)\,dy-\frac{1}{2}\int_{0}^{x}\mskw (\nabla\times w(y) -\nabla\times w(0)) \cdot\, dy
\end{align*}
Note that 
\begin{align*}
\int_{0}^{x}\deff(w)\,dy&=\int_{0}^{x}(w\nabla)\,dy+\frac{1}{2}\int_{0}^{x}(\nabla w-w\nabla)\cdot\,dy\\
&=w(x)-w(0)+\frac{1}{2}\int_{0}^{x}\mskw \nabla\times w (y)\cdot\, dy.
\end{align*}
This gives 
$$
\mathscr{P}^{1}(\deff(w))=w(x)-w(0)+\frac{1}{2}x\wedge (\nabla\times w(0)).
$$
Since the right hand side is an infinitesimal rigid motion, it lies in the kernel of $\deff$, so the right null homotopy relation is satisfied in degree zero. This reflects the fact that the original sum of de Rham complexes has a six dimensional space of constants as its cohomology in degree zero.

In the case of general Sobolev spaces, the above smooth version generalized to $\mathscr{P}^{1}=P(PS-SP)Td$, where the de~Rham Poincar\'e operators $P^{\bs}$ are replaced by the regularized version in \cite{costabel2010bogovskiui}.

At index two, the formula simplifies to $\mathscr{P}^{2}=-{\Pi_{\ran^{\perp}}}P(PS-SP)$, which maps the second component of $Y^{2}$ to the first component of $Y^{1}$  as $S^{1}$ is bijective. At index three, we get $\mathscr{P}^{3}=-{\Pi_{\ker}}dTP(PS-SP)+{\Pi_{\ker}}P$. If $P\circ P=0$, these formulas further simplify to $\mathscr{P}^{2}={\Pi_{\ran^{\perp}}}PSP$ and $\mathscr{P}^{3}={\Pi_{\ker}}dTPSP+{\Pi_{\ker}}P$, respectively.

 Compared to the Ces\`aro-Volterra formula with little regularity  \cite{ciarlet2010cesaro}, the new formula $\mathscr{P}^{1}$ is polynomial-preserving (when the input de~Rham $P^{\bs}$ operators are so) and has a more explicit form (whereas the formula in \cite{ciarlet2010cesaro} is given by duality), and is valid for a wide range of spaces, following the results in \cite{costabel2010bogovskiui}. To see that the two results are different, we note that the formula in \cite{ciarlet2010cesaro} boils down to the classical Ces\`aro-Volterra formula if the fields it acts on are smooth enough. Contrarily, this is not the case for the $\mathscr{P}^{1}$ operator derived above as the input $P^{\bs}$ involves smoothing (averaging) even for smooth functions. 

\begin{remark}
In the proxies above, for the convenience of notation, we have used the convention that differential operators act row-wise on a matrix-valued function. For example, if $u$ is a vector-valued 0-form, then $du$ corresponds to $(du)_{ij}:=\partial_{j}u_{i}$, where $(du)_{ij}$ denotes the $(i, j)$-th entry of the matrix proxy of $du$. Alternatively, in another notation, $du$ corresponds to $u\nabla$, where $\nabla$ acts ``from the right'' (row-wise).  Correspondingly, the de~Rham Poincar\'e operators act row-wise. For example, if $v$ is a matrix-valued function, then $\int_{0}^{x}dy\wedge v(y)$ extends  \eqref{dy-wedge} to  each row of $v$. This is consistent with the convention in \cite{christiansen2020poincare}, but different from \cite{arnold2021complexes,vcap2022bgg}.
\end{remark}

 \medskip
 
 \textbf{The twisted complex}: The vector/matrix form of the elasticity version of the twisted
 complex (obtained from the two middle rows in \eqref{diagram-4rows})
 is the following:
  \begin{equation} \label{twisted-elasticity}
\begin{tikzcd}[ampersand replacement=\&, column sep=1in]
\left (
\begin{array}{c}
{C^{\infty}\otimes \mathbb{V}}\\
{C^{\infty}\otimes \mathbb{V}}
\end{array}
\right ) \arrow{r}{\left (
\begin{array}{cc}
\grad & \mskw\\
0&\grad
\end{array}
\right ) } \&\left (
\begin{array}{c}
{C^{\infty}\otimes \mathbb{M}}\\
{C^{\infty}\otimes \mathbb{M}} 
\end{array}
\right )
\arrow{r}{\left (
\begin{array}{cc}
\curl & -\mathcal{S}\\
0&\curl
\end{array}
\right ) } \&\left (
\begin{array}{c}
{C^{\infty}\otimes \mathbb{M}}  \\
{C^{\infty}\otimes \mathbb{M}} 
\end{array}
\right )\arrow{r}{\left (
\begin{array}{cc}
\div & -2\vskw\\
0&\div
\end{array}
\right ) } \&\left (
\begin{array}{c}
{C^{\infty}\otimes \mathbb{V}}  \\
{C^{\infty}\otimes \mathbb{V}} 
\end{array}
\right ).
 \end{tikzcd}
\end{equation}
The Poincar\'e operators have the general form (see \eqref{PV})
$$
P_{V}=\left (
\begin{array}{cc}
P &-P(PS-SP)\\
0 & P
\end{array}
\right), 
$$
which further simplifies to 
$$
\left (
\begin{array}{cc}
P & -PSP\\
0 & P
\end{array}
\right),
$$
if the complex property $P\circ P=0$ holds for the input. 

For the   complex with smooth fields (for which the classical de~Rham Poincar\'e operators \eqref{smooth-poincare} can be used),  vector-matrix proxies have the form (see \eqref{dy-dot}-\eqref{dy-tensor} for the notation):
$$
P_{V}^{1}\left (
\begin{array}{c}
u\\
w
\end{array}
\right )=
\left (
\begin{array}{c}
\int_{0}^{x}u(y)\cdot\,dy+\int_{0}^{x}(-\mskw\int_{0}^{y}w(z)\cdot\,dz)\cdot\,dy\\
\int_{0}^{x}w(y)\cdot\,dy
\end{array}
\right )=\left (
\begin{array}{c}
\int_{0}^{x}u(y)\cdot\,dy+\int_{0}^{x}( \int_{0}^{y}w(z)\cdot\,dz)\wedge dy\\
\int_{0}^{x}w(y)\cdot\,dy
\end{array}
\right ),
$$
$$
P_{V}^{2}\left (
\begin{array}{c}
u\\
w
\end{array}
\right )=
\left (
\begin{array}{c}
\int_{0}^{x}u(y)\wedge dy+\int_{0}^{x}(\mathcal{S}\int_{0}^{y}w(z)\wedge dz)\wedge dy\\
\int_{0}^{x}w(y)\wedge dy
\end{array}
\right ),
$$
and
$$
P_{V}^{3}\left (
\begin{array}{c}
u\\
w
\end{array}
\right )=
\left (
\begin{array}{c}
\int_{0}^{x}u(y)\otimes dy+\int_{0}^{x}(2\vskw\int_{0}^{y}w(z)\otimes dz)\otimes dy\\
\int_{0}^{x}w(y)\otimes dy
\end{array}
\right ),
$$
respectively.

 \subsection{Polynomial complexes}\label{sec:polynomial}

The examples in \cite{vcap2022bgg} have vector-valued de~Rham complexes as input (each row in \eqref{diag:multi-rows}). Then all the operators $d^{\bs}$, $P^{\bs}$ (regularized Poincar\'e operators for the de~Rham complex) and $S^{\bs}$ are polynomial-preserving. Following the discussions in Section \ref{sec:Polynomial-preservation}, we obtain polynomial exact sequences. Examples include the elasticity complex,
   \begin{equation}\label{poly-elast} 
\begin{tikzcd}
  0 \arrow{r}& \mathcal{P}_{r}\otimes \mathbb{V}\arrow{r}{\deff}& \mathcal{P}_{r-1}\otimes \mathbb{S}\arrow{r}{\inc }& \mathcal{P}_{r-3}\otimes \mathbb{S} \arrow{r}{\div }&\mathcal{P}_{r-4}\otimes \mathbb{V} \arrow{r}&0,
\end{tikzcd}
\end{equation}
the conformal Hessian complex
   \begin{equation} \label{poly-conf-hess}
\begin{tikzcd}
  0 \arrow{r}& \mathcal{P}_{r} \arrow{r}{\dev\hess}& \mathcal{P}_{r-2}\otimes (\mathbb{S}\cap \mathbb{T})\arrow{r}{\sym\curl }& \mathcal{P}_{r-3}\otimes  (\mathbb{S}\cap \mathbb{T}) \arrow{r}{\div\div }&\mathcal{P}_{r-5}\arrow{r}&0,
\end{tikzcd}
\end{equation}
and the conformal deformation complex
   \begin{equation} \label{poly-conf-def}
\begin{tikzcd}
 0 \arrow{r}& \mathcal{P}_{r}\otimes \mathbb{V}  \arrow{r}{\dev\deff}& \mathcal{P}_{r-1}\otimes (\mathbb{S}\cap \mathbb{T})\arrow{r}{\cot }& \mathcal{P}_{r-4}\otimes  (\mathbb{S}\cap \mathbb{T}) \arrow{r}{ \div }&\mathcal{P}_{r-5}\otimes \mathbb{V} \arrow{r}&0.
\end{tikzcd}
   \end{equation}
\begin{theorem}
The polynomial complexes  \eqref{poly-elast}, \eqref{poly-conf-hess} and \eqref{poly-conf-def} are exact in degrees $>0$ while the cohomologies in dimension $0$ are the $6$-dimensional space of rigid infinitesimal motions, the $5$-dimensional space $\ker(\dev\hess)$ and the $10$ dimensional space $\ker(\dev\deff)$, respectively.
\end{theorem}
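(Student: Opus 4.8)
The plan is to realise each of the three polynomial complexes as the polynomial subcomplex $\Upsilon^{\bs}\cap\Pi^{\bs}$ of the corresponding BGG complex, transport onto it the homotopy relation constructed in Section~\ref{sec:construction}, and then read off exactness in positive degrees from that relation while computing the degree‑zero cohomology directly.

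Concretely, I would first fix a star‑shaped domain $\Omega$ (for instance a ball) and feed the construction of Section~\ref{sec:construction} with de~Rham Poincaré operators for which the strict null‑homotopy relation \eqref{DPPD-input} holds — either the classical operators \eqref{smooth-poincare} or the regularised operators of \cite{costabel2010bogovskiui}. This yields BGG Poincaré operators $\mathscr{P}^{\bs}$ with $\mathscr{D}^{i-1}\mathscr{P}^{i}+\mathscr{P}^{i+1}\mathscr{D}^{i}=I$ for $i>0$ and $\mathscr{D}^{0}\mathscr{P}^{1}\mathscr{D}^{0}=\mathscr{D}^{0}$. For the polynomial spaces I would take $\Pi^{i,j}:=\mathcal{P}_{m-i-j}\otimes\Lambda^{i}\mathbb{R}^{n*}\otimes\mathbb{V}_{j}$ (or the homogeneous analogue) for a large homogeneity $m$, as in the remark after Corollary~\ref{cor:polynomial}. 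Conditions (a)--(c) there are elementary properties of polynomials, and conditions (d), (e) hold because in the representation‑theoretic setting of Section~\ref{sec:repr-bgg} the operators $S$, and hence $T$, are algebraic zeroth‑order bundle maps, so they preserve polynomial degree while $m-(i+1)-(j-1)=m-i-j=m-(i-1)-(j+1)$. Thus Corollary~\ref{cor:polynomial} applies: $\mathscr{P}^{i}$ maps $\Upsilon^{i}\cap\Pi^{i}$ to $\Upsilon^{i-1}\cap\Pi^{i-1}$, and since $\Pi_{\Upsilon}=I-ST-TS$ and $d_{V}=d-S$ preserve $\Pi^{\bs}$, the differential $\mathscr{D}^{\bs}$ restricts to $\Pi^{\bs}$ as well. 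A bookkeeping of which summands $\mathbb{V}_{j}$ survive at each index — governed by the injective or surjective $S$‑operators recorded in Section~\ref{sec:examples} — then identifies $(\Upsilon^{\bs}\cap\Pi^{\bs},\mathscr{D}^{\bs})$ with \eqref{poly-elast}, \eqref{poly-conf-hess}, \eqref{poly-conf-def} respectively (with $r=m$), including the precise degree shifts displayed there.

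Granting this identification, exactness in positive degrees is formal: if $u\in\Upsilon^{k}\cap\Pi^{k}$ with $k>0$ and $\mathscr{D}^{k}u=0$, then $u=\mathscr{D}^{k-1}(\mathscr{P}^{k}u)$ with $\mathscr{P}^{k}u\in\Upsilon^{k-1}\cap\Pi^{k-1}$; taking $k$ to be the top index, where the target is zero, shows that the last differential is onto. For the degree‑zero cohomology one has $\mathscr{H}^{0}=\ker(\mathscr{D}^{0})$, and $\mathscr{D}^{0}$ is the proxy of $\deff$, $\dev\hess$, $\dev\deff$ in the three cases. I would identify these kernels on polynomials directly: $\ker(\deff)$ is the $6$‑dimensional space $\mathrm{RM}=\{a+b\wedge x\}$ of infinitesimal rigid motions (differentiating $\partial_iu_j+\partial_ju_i=0$ twice forces $u$ to be affine with skew gradient); $\ker(\dev\hess)$ consists of polynomials $p$ with $\hess p\in\mathbb{R}\cdot I$, which a short separation‑of‑variables argument shows are exactly $p=c|x|^{2}+\ell(x)+d$ with $\ell$ linear, a $5$‑dimensional space; and $\ker(\dev\deff)$ is the space of conformal Killing fields of $\mathbb{R}^{3}$, which by Liouville's theorem is the $10$‑dimensional conformal algebra $\mathfrak{so}(4,1)$ realised by polynomial vector fields of degree $\le 2$. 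Each kernel is finite‑dimensional and lies in $\mathcal{P}_{r}\otimes(\cdot)$ once $r\ge 1,\,2,\,2$ respectively, the regime in which the stated dimensions are attained.

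The step I expect to require the most care is twofold: matching the Sobolev/polynomial degree shifts produced by the BGG reduction with those written in \eqref{poly-elast}--\eqref{poly-conf-def}, which amounts to tracking precisely which $\mathbb{V}_{j}$‑components get eliminated by the injective or surjective $S$‑maps at each index; and verifying that the polynomial solution spaces of $\deff$, $\dev\hess$ and $\dev\deff$ do not grow with $r$ (classical, but for $\dev\deff$ genuinely Liouville's theorem). The transport of the homotopy identity and the resulting exactness in positive degrees are, by contrast, immediate consequences of Section~\ref{sec:construction} and Corollary~\ref{cor:polynomial}.
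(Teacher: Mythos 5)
Your proposal is correct and follows essentially the same path as the paper's (very compressed) proof: one invokes the polynomial‑preserving BGG Poincar\'e operators of Corollary~\ref{cor:polynomial} on $\mathbb R^n$, reads off exactness in positive degrees from the null‑homotopy relation, and identifies the degree‑zero cohomology with the kernel of the first operator. Your extra bookkeeping (verifying conditions (a)--(e), matching degree shifts under the $S$‑eliminations, and the direct kernel computations for $\deff$, $\dev\hess$, $\dev\deff$) simply fills in the details the paper leaves implicit.
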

 \begin{proof}
On $\Bbb R^n$, we get the Poincar\'e operators on the smooth complexes as discussed above and they are all polynomial preserving. Thus we get the claimed information on the cohomology.  
\end{proof}
For the three examples derived from \eqref{diagram-4rows} (the elasticity, Hessian and $\div\div$ complexes), polynomial complexes and their exactness were discussed in \cite{chen2022discrete,chen2022finite-divdiv,chen2022finite,christiansen2020poincare}. 

The above complexes are analogues of \eqref{polynomial-1} with full polynomials. Another family of polynomial complexes, aimed at an analogue of \eqref{polynomial-2}, can be derived from a sequence of polynomial spaces of equal degree, e.g., 
   \begin{equation}\label{polynomial-elas-r} 
\begin{tikzcd}
  0 \arrow{r}& \mathcal{P}_{r}\otimes \mathbb{V}\arrow{r}{\deff}& \mathcal{P}_{r}\otimes \mathbb{S}\arrow{r}{\inc }& \mathcal{P}_{r}\otimes \mathbb{S} \arrow{r}{\div }&\mathcal{P}_{r}\otimes \mathbb{V} \arrow{r}&0.
\end{tikzcd}
\end{equation}
Here we used the elasticity complex as an example, but the discussions also hold for other complexes.  Clearly, \eqref{polynomial-elas-r} is a complex, but we do not get Poincar\'e operators here, since they do not preserve the degree of polynomials and hence we cannot make conclusions on the cohomology. To fix this, we can modify the spaces in the complex by including a $\mathscr{P}^\bs$-component, i.e., 
\begin{equation}\label{polynomial-elas-r+} 
\begin{tikzcd}[row sep=small]
   0 \arrow{r}& \mathcal{P}_{r}\otimes \mathbb{V} + \mathscr{P}^{1}[\mathcal{P}_{r}\otimes \mathbb{S}]\arrow{r}{\deff}& \mathcal{P}_{r}\otimes \mathbb{S} + \mathscr{P}^{2}[\mathcal{P}_{r}\otimes \mathbb{S}]\\
   \arrow{r}{\inc }& \mathcal{P}_{r}\otimes \mathbb{S} + \mathscr{P}^{3}[\mathcal{P}_{r}\otimes \mathbb{V}] \arrow{r}{\div }&\mathcal{P}_{r}\otimes \mathbb{V} \arrow{r}&0.
\end{tikzcd}
\end{equation}
 The null homotopy relations for the operators $\mathscr{P}^{i}$ easily imply that the operators in \eqref{polynomial-elas-r+} indeed map each space in the sequence to the next space. In general, it is still not clear that we get a Poincar\'e operator for this complex, since the spaces are not necessarily mapped to each other by the operators $\mathscr{P}^i$. This is true here, however, since we can use the Poincar\'e operators derived from the smooth de~Rham version \eqref{smooth-poincare} which satisfy
 the complex property. Then \eqref{polynomial-elas-r+} is also a complex with $\mathscr{P}^{\bs}$ and we get the conclusions on the cohomology.

This construction above is a general procedure. We can start with any complex of $d^{\bs}$ and complete it by adding $\mathscr{P}^{\bs}$ components such that the sequence is also a complex of $\mathscr{P}^{\bs}$. 
One may further investigate the construction of finite elements based on polynomial exact sequences, but this is beyond the scope of this paper. 


\section*{Acknowledgement}

A.\v C.\ is supported by the Austrian Science Fund (FWF): P33559-N. {This article/publication is based upon work from COST Action 
CaLISTA CA21109 supported by COST (European Cooperation in Science and Technology). www.cost.eu.} K.H. is supported by a Royal Society University Research Fellowship (URF$\backslash${\rm R}1$\backslash$221398).

\bibliographystyle{siam}      
\bibliography{reference}{}   

\end{document}